\documentclass[11pt,letterpaper]{amsart}

\usepackage{header}
\numberwithin{equation}{subsection}
\setcounter{section}{0}
\begin{document} 
\title{Spectral Multiplicity Bounds for Jacobi Operators on Star-Like Graphs}
\author{Netanel Levi}
\thanks{Netanel Levi, Department of Mathematics, University of California, Irvine. Supported by NSF DMS-2052899, DMS-2155211, and Simons 896624. Email: netanell@uci.edu}
\author{Tal Malinovitch}
\thanks{Tal Malinovitch, Department of Mathematics, Rice Unniversity. Email: tal.malinovitch@rice.edu}
\begin{abstract}
We study the spectral multiplicity of Jacobi operators on star-like graphs with $m$ branches. Recently, it was established that the multiplicity of the singular continuous spectrum is at most $m$. Building on these developments and using tools from the theory of generalized eigenfunction expansions, we improve this bound by showing that the singular continuous spectrum has multiplicity at most $m-1$. We also show that this bound is sharp, namely, we construct operators with purely singular continuous spectrum of multiplicity $m-1$.
\end{abstract}
\maketitle
\section{Intro}
In this work, we study the spectral multiplicity of Jacobi operators on star-like graphs. We will assume throughout that the graphs we discuss are infinite, though locally finite. Given a graph $\calG=\left(\calV,\calE\right)$, a Jacobi operator on $\calG$ is a linear operator $J:\ell^2\left(\calV\right)\to\ell^2\left(\calV\right)$ which is given by
\begin{equation}\label{op_eq}
    J\psi(v)=\sum_{w\sim v} a_{v,w} \psi(w)+b_v\psi(v).
\end{equation}
Here, $a:\calE\to\mathbb{R}$ and $b:\calV\to\mathbb{R}$ are both bounded functions. We will also assume that $a$ is positive. Under these assumptions, the operator $J$ is bounded and self-adjoint. We will focus on the spectral multiplicity of such operators.

The spectral multiplicity of self-adjoint operators has been the subject of many works; some examples are  \cite{gilbert1998subordinacy,halmos2017introduction,howland1986finite,kats1963spectral,levi2023subordinacy,liaw2020matrix,simon2004theorem,simonov2014spectral,simonov2024local}. A self-adjoint operator $T$ can be associated with a Borel measure $\mu$ and a multiplicity function $N:\sigma\left(T\right)\to\mathbb{N}\cup\left\{\infty\right\}$, where $\sigma(T)$ is the spectrum of $T$, and $N$ is defined $\mu$-almost everywhere. One reason for studying $\mu$ and $N$ is, for example, the fact that the pair $\left(\mu,N\right)$ determines a self-adjoint operator up to unitary equivalence. In the pure point spectrum, the multiplicity function $N\left(E\right)$ is precisely the dimension of the eigenspace corresponding with $E$. In the case of Jacobi operators on graphs, an important example is that of a Jacobi operator $J$ on  $\calG=\mathbb{N}$, where $x,y\in\mathbb{N}$ are neighbors if and only if $\left|x-y\right|=1$. In that case, it is not hard to see that the vector $\delta_1$ is cyclic for $J$ (see Section \ref{prelim_sec} for precise definitions). It is known that, in general, the multiplicity is bounded from above by the size of a cyclic set; this implies that for Jacobi operators on $\ell^2\left(\mathbb{N}\right)$, $N$ is equal to $1$ $\mu$-almost everywhere, see \cite{halmos2017introduction}.

Unlike the case of $\mathbb{N}$, Jacobi operators on $\ell^2\left(\mathbb{Z}\right)$ do not necessarily have a cyclic vector, and so this argument no longer works. In this case, while the absolutely continuous spectrum may have multiplicity $N=2$ (see, e.g., \cite[Chapter 10]{kato2013perturbation}), it was proved by Kac \cite{kats1963spectral} that this is not possible in the singular spectrum. Namely, he proved that if we denote the singular part of $\mu$ w.r.t.\ the Lebesgue measure by $\mu_s$, then for $\mu_s$-almost every $E\in\mathbb{R}$, $N\left(E\right)=1$. Since then, this result was proved in several different ways \cite{gilbert1998subordinacy,levi2023subordinacy,simon2004theorem} and generalized to many other settings \cite{howland1986finite,levi2023subordinacy,liaw2020matrix,simon2004theorem,simonov2014spectral,simonov2024local}.

In \cite{gilbert1998subordinacy,levi2023subordinacy}, a connection was established between spectral multiplicity and subordinacy theory, which relates local singularity of $\mu$ to the existence of certain solutions to an associated eigenvalue equation (see (\ref{ev_eq_half_line}) below). Roughly speaking, a solution to the eigenvalue equation is called subordinate if it grows slower (at infinity) than any other solution. In \cite{gilbert1998subordinacy}, this connection, along with the uniqueness of subordinate solutions, was used in order to determine simplicity of the singular spectrum for operators on the line. This idea was then generalized in \cite{levi2023subordinacy} to the case of star-like graphs, which are given by pasting a finite number of half-lines to a compact graph (see Section \ref{prelim_sec} for a precise definition). For these graphs, it was shown that for $\mu_s$-almost every $E\in\mathbb{R}$, $N\left(E\right)$ is bounded from above by the number of linearly independent subordinate solutions.

The results of \cite{levi2023subordinacy} can be rephrased in terms of generalized eigenfunction expansion theory, see \cite{berezanskiui1968expansions} and references therein. It is well-known that the spectral resolution $P$ of a self-adjoint operator can be realized as an integral of generalized projections, namely
\begin{align*}
    P\left(A\right)=\int\limits_\mathbb{R}P\left(E\right)\,d\mu\left(E\right)
\end{align*}
for every Borel set $A\subseteq\mathbb{R}$, where $P\left(E\right)$ acts on a dense subspace and projects vectors to generalized eigenfunctions. Then, the multiplicity function $N(E)$ is given by \mbox{$N\left(E\right)=\dim\left(\Ran\left(P\left(E\right)\right)\right)$}. With that in mind, the results of \cite{levi2023subordinacy} essentially say that every $u\in\Ran\left(P\left(E\right)\right)$ is a subordinate solution. Thus, indeed, $N\left(E\right)$ is bounded from above by the maximal amount of linearly independent subordinate solutions.

While these results provide a seemingly complete picture in the singular part of the spectrum, the bounds can be improved upon further restrictions. Indeed, in the singular continuous part of the spectrum, a solution to the eigenvalue equation is necessarily not in $\ell^2\left(\calG\right)$, and in particular must be supported on one of the half-lines. Using this intuition, it was shown in \cite{levi2023subordinacy} that for $\mu_{\text{sc}}$-almost every $E\in\mathbb{R}$, $N\left(E\right)\leq m$, where $m$ is the number of half-lines attached to the compact component. Our main result is a further improvement of this bound.

\begin{theorem}\label{main_thm}
    Let $\calG$ be a star-like graph with $m$ branches and $J$ be a Jacobi operator on it, then for $\mu_{\text{sc}}$-almost every $E\in\mathbb{R}$, $N\left(E\right)\leq m-1$. Furthermore, for every $m$ there is a star like graph $\calG_m$ with $m$ branches such that for $\mu_{\text{sc}}$-almost every $E\in\mathbb{R}$, we have that $N\left(E\right)= m-1$. 
\end{theorem}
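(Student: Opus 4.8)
The plan is to combine the generalized eigenfunction expansion characterization $N(E)=\dim\Ran(P(E))$ with a dimension count for the space of subordinate solutions. Write $\mathcal{S}(E)$ for the space of all (not necessarily $\ell^2$) solutions of $Ju=Eu$ on $\calG$. Parametrizing such a solution by its values on the compact component together with the value at the first vertex $1_j$ of each branch, and imposing the eigenvalue equation at each vertex of the compact component, a direct count shows $\dim\mathcal{S}(E)=m$ for every $E$; this already reproduces the bound $N(E)\leq m$. To gain the extra unit, I would use the fact (from the cited results) that for $\mu_{\text{sc}}$-a.e.\ $E$ every element of $\Ran(P(E))$ is a subordinate solution, together with the observation that such a solution restricts to a subordinate solution on each branch.

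The key step is then purely linear-algebraic. For a branch $j$, let $R_j\colon\mathcal{S}(E)\to\mathbb{C}^2$ be the boundary map sending $u$ to its two values at the junction, and let $\Lambda_j\subseteq\mathbb{C}^2$ be the image of the subordinate solutions on branch $j$. For $\mu_{\text{sc}}$-a.e.\ $E$ at least one branch $j^\ast$ lies in its own singular support, so on that branch a subordinate solution exists and is unique up to scaling, giving $\dim\Lambda_{j^\ast}=1$. Since (for $m\geq 2$) the values at the other junctions can always be adjusted to satisfy the equation on the compact component, $R_{j^\ast}$ is onto, whence $\dim R_{j^\ast}^{-1}(\Lambda_{j^\ast})=(m-2)+1=m-1$. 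As every subordinate solution lies in $R_{j^\ast}^{-1}(\Lambda_{j^\ast})$, we obtain $N(E)\leq\dim\mathcal{S}_{\mathrm{sub}}(E)\leq m-1$. The specialization $m=2$ recovers Kac's theorem, a useful consistency check. I expect the main obstacle here to be the rigorous justification that a globally subordinate solution restricts to a subordinate solution on each branch, and that some branch is active for $\mu_{\text{sc}}$-a.e.\ $E$ (a nonzero subordinate solution is non-$\ell^2$, hence non-decaying on some branch, whose restriction is then a non-decaying subordinate solution placing that branch in its singular support); these are precisely the points where one must translate between the graph norm $\|\cdot\|_L$ and the branch-by-branch behavior.

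For sharpness I would take $\calG_m$ to be $m$ identical branches attached symmetrically to a single central vertex $v_0$, where each branch carries a half-line Jacobi operator whose Dirichlet realization has purely singular continuous spectrum (for instance a suitable sparse potential). The symmetric group $S_m$ permuting the branches then acts unitarily and commutes with $J$, so $\ell^2(\calG_m)$ splits into the trivial (symmetric) representation, on which $J$ is unitarily equivalent to a half-line operator containing $v_0$, and an $(m-1)$-fold copy of the standard representation, on which $J$ is unitarily equivalent to $m-1$ copies of the Dirichlet half-line operator. Solutions in the standard component automatically satisfy $u(v_0)=0$, so on the singular support $S_{\mathrm{std}}$ of the Dirichlet operator they are subordinate and realize $\dim\mathcal{S}_{\mathrm{sub}}(E)=m-1$; this forces $N(E)=m-1$ on $S_{\mathrm{std}}$, and $\mu_{\text{sc}}$ restricted there has multiplicity exactly $m-1$.

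The remaining, and I expect hardest, point is to rule out singular continuous spectrum of a different multiplicity coming from the symmetric channel. Because the Dirichlet and symmetric boundary conditions differ by a rank-one boundary perturbation, Aronszajn--Donoghue theory shows their singular parts are mutually singular, so the symmetric channel carries no mass on $S_{\mathrm{std}}$; the task is to choose the coupling $a_{v_0,1_j}$ and the onsite value $b_{v_0}$ so that the symmetric channel has no singular continuous spectrum at all (for example pure point, via a Simon--Wolff criterion), leaving $\mu_{\text{sc}}$ supported on $S_{\mathrm{std}}$ with uniform multiplicity $m-1$. Verifying that a single half-line potential simultaneously yields purely singular continuous Dirichlet spectrum and a non-singular-continuous symmetric channel is the technical crux of the construction.
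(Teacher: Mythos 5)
Your proof of the upper bound $N(E)\le m-1$ takes a genuinely different route from the paper's, and in outline it works. The paper argues by contradiction: assuming multiplicity $m$ on a set of positive $\mu_{sc}$-measure, it uses the direct-integral form of the spectral theorem to manufacture an infinite-dimensional $J$-invariant subspace of $\ell^2(\calG)$ supported on $\calK\cup L_i$ (Lemma \ref{wi_lemma}), and then rules such subspaces out by a unique-continuation argument (Claims \ref{LinearComb}--\ref{ContraClaim}). You instead bound $\dim\calS_{\mathrm{sub}}(E)$ directly: the full solution space has dimension $m$, the boundary-data map $R_{j^\ast}$ onto $\mathbb{C}^2$ is surjective, the subordinate data on branch $j^\ast$ is a line by Corollary \ref{UniqueScalar}, so subordinate solutions lie in an $(m-1)$-dimensional preimage, and (\ref{dimension_sub_sol_multiplicity}) finishes; this avoids the direct integral and invariant subspaces entirely. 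The gap is that your two linear-algebra claims are false with the stated quantifiers. Neither ``$\dim\calS(E)=m$ for every $E$'' nor ``$R_{j^\ast}$ is always onto'' holds on a general star-like graph: if $\calK$ contains a pendant vertex $d$ attached to $w$, then at $E=b_d$ every solution vanishes at $w$, which can destroy surjectivity of $R_{j^\ast}$; and two pendant vertices with equal potential produce an energy at which $\dim\calS(E)=m+1$. Both failure sets are finite, hence $\mu_{sc}$-null, so your argument survives -- but proving finiteness (i.e.\ that the relevant minors of $\left[\left(J_{\calK}-E\right)\mid B\right]$ are not identically zero in $E$) requires exactly the positivity of sums of edge-weight products over shortest paths in $\calK$, which is the combinatorial core of the paper's Claims \ref{LinearComb} and \ref{Induction}. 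Once you supply that (e.g.\ via the large-$E$ expansion of $\left(J_{\calK}-E\right)^{-1}$ between branch roots, using $a>0$ and connectedness of $\calK$), your first half is complete and is arguably the more direct proof.

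For sharpness you use the same construction as the paper: your $S_m$ decomposition into the trivial representation plus $m-1$ copies of the standard one is equivalent to the $\mathbb{Z}_m$-Fourier splitting in the proof of Lemma \ref{SharpnessLemma}. The point you identify as the crux -- killing singular continuous spectrum in the symmetric channel -- is a genuine issue and not a formality; it is in fact a point the paper's own argument passes over, since the inequality $N\ge(m-1)N_0$ gives $N=m-1$ only on the part of $\mu_{sc}$ carried by the $m-1$ Dirichlet copies. Your worry is well founded: if $J_0$ is a sparse-potential operator, singular continuity in $(-2,2)$ survives every boundary modification, so the symmetric channel does carry nonzero sc spectrum, mutually singular with the Dirichlet part, on which $N=1\neq m-1$ when $m\ge 3$. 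Your proposed repair is the right one -- take a tail for which the sc spectral type is boundary-condition-sensitive (a localized tail made purely singular continuous at the Dirichlet condition by del Rio--Makarov--Simon genericity), and then use Simon--Wolff to choose $b_o$, which perturbs only the symmetric channel and none of the other $m-1$ channels, so that the symmetric channel is pure point. Since you leave this step open, your sharpness half is incomplete as written (for $m=2$ the issue is vacuous, as $N=1=m-1$ on any sc component), but carrying it out would close the construction with the full quantifier ``for $\mu_{sc}$-almost every $E$''.
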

The proof of the first part of Theorem \ref{main_thm} has two main ingredients. The first is an eigenfunction expansion formula for star-like graphs, which is a generalization of the results found in \cite{levi2024eigenfunction}. Namely, we formally show that for $\mu_s$-almost every $E\in\mathbb{R}$, every $f\in\Ran P\left(E\right)$ is a subordinate solution. Using this, we show that the existence of a non-trivial Borel set on which the multiplicity is equal to $m$ implies the existence of an infinite-dimensional $J$-invariant subspace, which consists of functions that are supported on a single half-line. Finally, we turn to show that such subspaces cannot exist, so the multiplicity must be less than or equal to $m-1$.

In a recent work \cite{simonov2024local}, the multiplicity of the singular spectrum is analyzed for operators constructed by gluing together a finite number of self-adjoint operators with simple spectrum. While there is a certain connection, the results of \cite{simonov2024local} are not directly applicable to our framework, nor do our results immediately extend to their setting. However, we believe our results may be generalized to a setting similar to that of \cite{simonov2024local}. We discuss this in Section \ref{PastingSection}.

The paper is structured as follows. In Section \ref{prelim_sec}, we present some notations and preliminary results that will be used in the proof of Theorem \ref{main_thm}. In Section \ref{proof_section}, we prove Theorem \ref{main_thm}. Finally, in Section \ref{discuss_section}, we provide an example where the bound in Theorem \ref{main_thm} is achieved, and further discussion.

\subsection*{Acknowledgments} We would like to thank Jonathan Breuer for suggesting the problem and
for useful discussions.
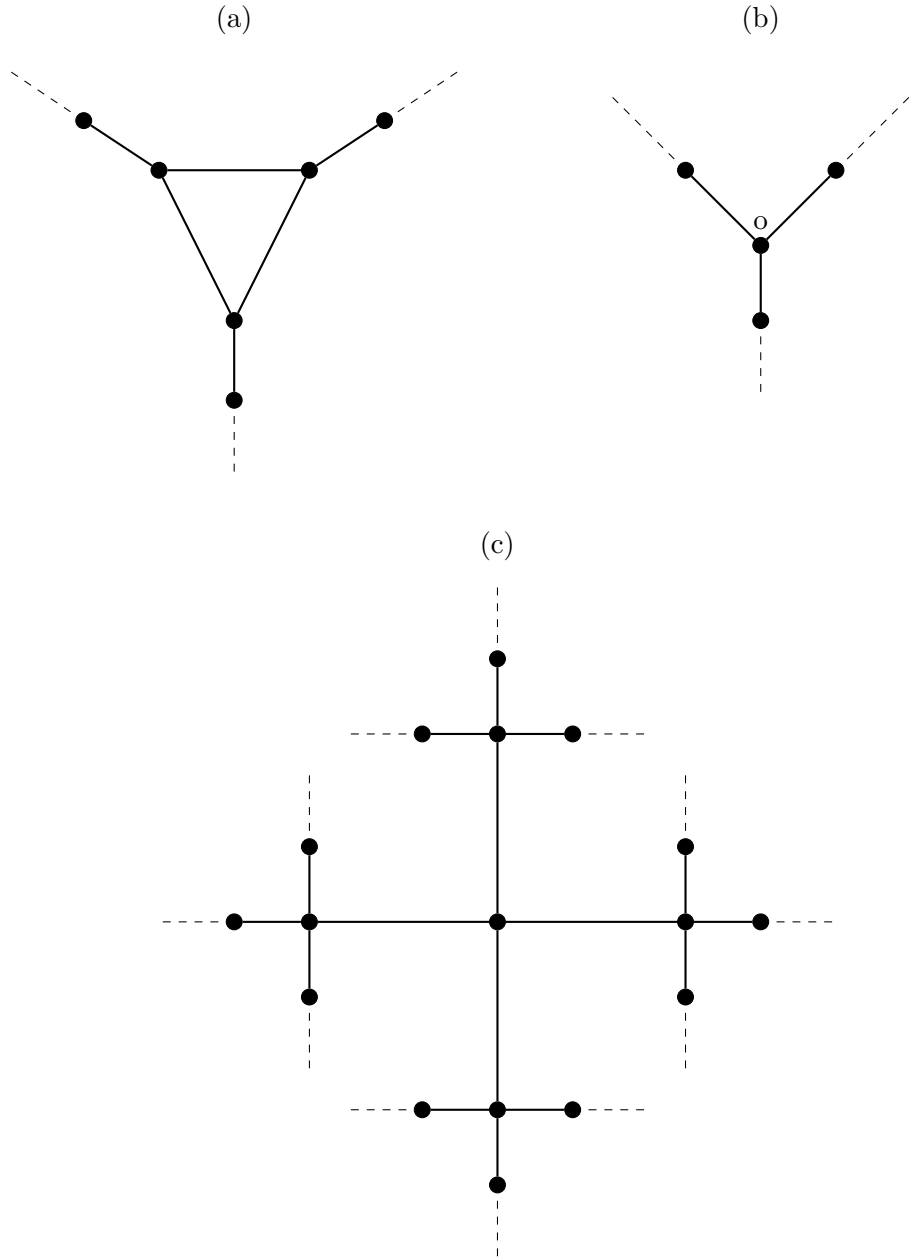
\begin{figure}
	\begin{center}
		\begin{tikzpicture}[scale=1]
			\vertex(t1) at (-6,1) {};
			\vertex(t2) at (-4,1) {};
			\vertex(t3) at (-5,-1) {};
			\vertex(t21) at (-3,1.66) {};
			\vertex(t11) at (-7,1.66) {};
			\vertex(t31) at (-5,-2.06) {};
			\Edge (t1)(t2);
			\Edge (t2)(t3);
			\Edge(t3)(t1);
			\Edge(t2)(t21);
			\Edge(t1)(t11);
			\Edge(t3)(t31);
			\draw (-5,-2.06) -- (-5,-3.06)[dashed];
			\draw (-3,1.66) -- (-2,2.33)[dashed];
			\draw(-7,1.66) -- (-8,2.33)[dashed];
			\node at (-5,3) {(a)};

			\vertex(so) at (2,0) {};
			\node at (2,0.3) {o};
			\vertex(s1) at (3,1) {};
			\vertex(s2) at (2,-1) {};
			\vertex(s3) at (1,1) {};
			\Edge (so)(s1);
			\Edge (so)(s2);
			\Edge (so)(s3);
			\draw (3,1) -- (4,2)[dashed];
			\draw (2,-1) -- (2,-2)[dashed];
			\draw (1,1) -- (0,2)[dashed];
			\node at (2,3) {(b)};
			
	\vertex(ro) at (-1.5,-9) {};
			\vertex(r1) at (-1.5,-6.5) {};
			\vertex(r11) at (-1.5,-5.5) {};
			\vertex(r12) at (-0.5,-6.5) {};
			\vertex(r13) at (-2.5,-6.5) {};
			\vertex(r2) at (1,-9) {};
			\vertex(r21) at (1,-8) {};
			\vertex(r22) at (2,-9) {};
			\vertex(r23) at (1,-10) {};
			\vertex(r3) at (-1.5,-11.5) {};
			\vertex(r31) at (-0.5,-11.5) {};
			\vertex(r32) at (-1.5,-12.5) {};
			\vertex(r33) at (-2.5,-11.5) {};
			\vertex(r4) at (-4,-9) {};
			\vertex(r41) at (-4,-10) {};
			\vertex(r42) at (-5,-9) {};
			\vertex(r43) at (-4,-8) {};
			\Edge (ro)(r1);
			\Edge(r1)(r11);
			\Edge(r1)(r12);
			\Edge(r1)(r13);
			\Edge(ro)(r2);
			\Edge(r2)(r21);
			\Edge(r2)(r22);
			\Edge(r2)(r23);
			\Edge(ro)(r3);
			\Edge(r3)(r31);
			\Edge(r3)(r32);
			\Edge(r3)(r33);
			\Edge(ro)(r4);
			\Edge(r4)(r41);
			\Edge(r4)(r42);
			\Edge(r4)(r43);
			\draw (-1.5,-5.5) -- (-1.5,-4.5)[dashed];
			\draw (-0.5,-6.5) -- (0.5,-6.5) [dashed];
			\draw (-2.5,-6.5) -- (-3.5,-6.5)[dashed];
			\draw (1,-8) -- (1,-7)[dashed];
			\draw (2,-9) -- (3,-9)[dashed];
			\draw (1,-10) -- (1,-11)[dashed];
			\draw (-0.5,-11.5) -- (0.5,-11.5)[dashed];
			\draw (-1.5,-12.5) -- (-1.5,-13.5)[dashed];
			\draw (-2.5,-11.5) -- (-3.5,-11.5)[dashed];
			\draw (-4,-10) -- (-4,-11)[dashed];
			\draw (-5,-9) -- (-6,-9)[dashed];
			\draw (-4,-8) -- (-4,-7)[dashed];
			\node at (-1.5,-4) {(c)};
		\end{tikzpicture}
	\end{center}
	\captionof{figure}{Three examples of star-like graphs. The dashed lines represent copies of $\mathbb{N}$. (a) is a star-like graph with $3$ branches,  where the compact component can be taken to be any finite subgraph that contains the inner triangle.
    (b) is also called a star graph with $3$ branches. The compact component can be taken to be any finite subgraph that contains the vertex $o$ and two of its neighbors.  (c) is a star-like graph with $12$ branches. Graph (c) is also called the trimming of a $4$-regular tree. In the language of Section \ref{branching_section}, the branching sequence for this graph is $\left(3,3,1,1,1,1,\ldots\right)$. This figure is taken from \cite{levi2023subordinacy}.} 
	\label{sg_fig}
\end{figure}
\section{Preliminaries}\label{prelim_sec}
\subsection{Notation and setting}
Throughout this paper, we will discuss Jacobi operators acting on a graph. We will denote a graph by $\calG=(\calV,\calE)$, where $\calV$ is the collection of vertices, and $\calE\subset \calV\times \calV$ is the collection of edges. Since we will be working with functions defined on the vertices of the graph, we distinguish between those in $\ell^2(\calG)$ and more general functions on $\calV$. To that end, we reserve the letters $\varphi, \psi, \phi$ (and other letters from the Greek alphabet) for functions in $\ell^2(\calG)$, the letters $f,g,h$ for arbitrary functions $\calV \to \mathbb{C}$ (not necessarily in $\ell^2(\calG)$), and the letters $u, v, w$ to denote vertices.

In the following, we collect some standard definitions:
\begin{definition}
\begin{itemize}
    \item Given $u,v\in \calV$, we will write $u\sim v$ if $(u,v)\in \calE$.
    \item A path between $u$ and $v$ is a finite sequence of vertices $\left(x_0=u,x_1,\ldots,x_n=v\right)$ such that for every $1\leq i\leq n$, $x_{i-1}\sim x_i$. The number of vertices in the path is called its length.
    \item  Given $u,v\in\calV$, we denote by $\dist\left(u,v\right)$ the minimal length of a path connecting $u$ and $v$.
    \item For any $v\in \calV$ and $n\in\mathbb{N}$, we denote $B_n\left(v\right)=\left\{u\in\calV:\dist{\left(u,v\right)}\leq n\right\}$, and the boundary  $\partial B_n\left(v\right)=\left\{u\in\calV:\dist{\left(u,v\right)}= n\right\}$. Note that for any $v\in \calV$ we have that \mbox{$\partial B_1(v)=\{u\mid u\sim v\}$}.
    \item Given a subset of vertices $\calA\subset \calV$ we define, for any $u \in \calV$,  $\dist\left(u,\calA\right)=\inf\limits_{v\in \calA} \dist(u,v)$.
\end{itemize}
\end{definition}
We will focus on star-like graphs:
\begin{definition}
    A graph $\calG$ is called a \textit{star-like graph with $m$ branches} if
    \begin{align*}
    \calV=\calK\cup\bigcup_{i=1}^mL_i
    \end{align*}
    Where $\calK$ is a finite set, and for each $L_i $, there exists $\varphi_i :\bbN\cup\left\{0\right\}\rightarrow L_i$ with the following properties:
    \begin{enumerate}
        \item  For any $j\geq  1$ we have that  $B_1(\varphi_i(j))=\{\varphi_i(j-1),\varphi_i(j+1)\}$.
        \item We have that $B_1(\varphi_i(0))\subset \calK\cup \{\varphi_i(1)\}$.
        \item $\calK \cap L_i=\{\varphi_i(0)\} $
    \end{enumerate}
\end{definition}

\begin{remark}
    \begin{enumerate}
        \item Intuitively speaking, a star-like graph with $m$ branches is given by pasting $m$ half-lines (i.e.\ copies of $\mathbb{N}$) to a compact graph. See Figure \ref{sg_fig} for some examples.
        \item As discussed in \cite{levi2023subordinacy}, the choice of $\calK$ is not unique. However, our results will not depend on this choice, so we fix $\calK$ and refer to it as \textit{the} compact component of $\calG$.
        \item The case where $\calK=\{o\}\cup \bigcup_{i=1}^m \{\varphi_i(0)\}$, and $B_1(\varphi_i(0))=\{o,\varphi_i(1)\}$ is called star graph. 
    \end{enumerate}
\end{remark}

Let $f:\calV\to\mathbb{N}$ be given by $f\left(v\right)=\dist\left(v,\calK\right)+1$. We will consider the spaces $\calH_\pm$ and $\calH$ given by
\begin{center}
    $\calH=\ell^2\left(\calV\right)$,\\$\text{}$\\
    $\calH_\pm=\ell^2\left(\calV;f^{\pm 1}\right)\coloneqq\left\{\psi:\calV\to\mathbb{C}:\sum\limits_{v\in V}\left|\psi\left(v\right)\right|^2 f^{\pm 1}\left(v\right)<\infty\right\}$.
\end{center}
\begin{remark}
    Given $g:\calV\to\mathbb{R}_{>0}$,  the space $\ell^2\left(\calV;g\right)$ is called a \textit{weighted $\ell^2$ space}, and it is a Hilbert space with respect to the inner product $\langle\psi,\varphi\rangle_g=\sum\limits_{v\in \calV}\overline{\psi\left(v\right)}\varphi\left(v\right)g\left(v\right)$.
\end{remark}
\begin{remark}
    We would often abuse notation and write $\ell^2(\calG)$, where we in fact mean $\ell^2(\calV)$. 
\end{remark}
A Jacobi operator on a star-like graph $\calG$ with $m$ branches is a linear operator $J:\ell^2(\calG)\rightarrow\ell^2(\calG)$, which is given by
\begin{align*}
    \left(J\psi\right)(v)=\sum_{w\in \partial B_1(v)} a_{v,w} \psi(w)+b_v\psi(v)
\end{align*}
for every $v\in \calV$. We assume that $a\in\ell^\infty\left(\calE,\mathbb{R}_{>0}\right)$ and $b\in \ell^\infty(\calG,\mathbb{R})$. In that case, $J$ is bounded and self-adjoint. The function $b$ is also called the potential function.\par
\begin{remark}
    \begin{enumerate}
        \item The assumption that $a>0$ is only needed to ensure that the sums of products of the weights along the shortest paths between certain vertices will not be zero. 
        \item We believe that our results should also hold in the case where $a,b$ are unbounded as long as the resulting operator is essentially self-adjoint. However, since some of the results we rely on in our proofs are only proved for bounded operators, we restrict ourselves to that case.
    \end{enumerate}
    
\end{remark}
From now on, we will just say that we have a Jacobi operator $J$ on a star-like graph $\calG$ with $m$ branches, where $L_i,\calK,\varphi_i$ are all implied in this statement. \par 
We will consider spectral measures of vectors w.r.t.\ $J$. To that end, given $\varphi,\psi\in\ell^2\left(\calG\right)$, the joint spectral measure of $\varphi$ and $\psi$ w.r.t.\ $J$ is given by (see, for example, \cite{RSvol1})
\begin{center}
    $\mu_{\varphi,\psi}\left(A\right)=\langle\mathbbm{1}_A\left(J\right)\varphi,\psi\rangle,\,\,\,\,\,\,\,A\in\text{Borel}\left(\mathbb{R}\right)$.
\end{center}
If $\varphi=\psi$ we will denote $\mu_\varphi\coloneqq\mu_{\varphi,\varphi}$.\par
Given $v\in\calV$, we define $\delta_v:\calV\to\mathbb{C}$ by $\delta_v\left(u\right)=\begin{cases}
    1 & u=v\\ 0 & \text{otherwise}
\end{cases}$. It was proved in \cite{levi2023subordinacy} that the set $\calC=\left\{\delta_v:v\in\calK\right\}$ is cyclic for $J$, namely
\begin{equation}\label{cyclicity_eq}
    \ell^2\left(\calG\right)=\overline{\vspan\left\{J^k\delta_v:k\in\mathbb{N}\cup\left\{0\right\},v\in\calK\right\}}.
\end{equation}
It is well-known that in that case, for every $\varphi,\psi\in\ell^2\left(\calG\right)$, $\mu_{\varphi,\psi}\ll\mu$, where $\mu=\sum\limits_{\phi\in\calC}\mu_\phi$. For the rest of this work, we fix $\mu=\sum\limits_{v\in\calK}\mu_{\delta_v}$. Given $u,v\in\calV$, we will use the shorthand $\mu_{uv}\coloneqq\mu_{\delta_u,\delta_v}$ and $\mu_v\coloneqq\mu_{\delta_v,\delta_v}$. Given any finite Borel measure $\rho$, we will denote by $\rho_s$ the part of $\rho$ which is singular w.r.t.\ the Lebesgue measure. We will also denote by $\rho_{sc}$ the part of $\rho$ which is singular continuous, namely singular w.r.t.\ the Lebesgue measure and has no atoms.
\subsection{Generalized eigenfunction expansion and direct integrals}
In this section, we state some well-known spectral theoretic results in the theory of generalized eigenfunction expansion and direct integrals as presented in \cite{berezanskiui1968expansions,berezansky1996functional}. In order to avoid technicalities, we phrase the results in terms of Jacobi operators on star-like graphs rather than in their full generality.

Since the set $\calC$ defined above is cyclic, for every $u,v\in\calV$, we have $\mu_{uv}\ll\mu$. Combining this with the fact that $\calV$ is countable, we can find a full measure set $A\in\text{Borel}\left(\mathbb{R}\right)$ such that on $A$, all of the Radon-Nikodym derivatives $\frac{d\mu_{uv}}{d\mu}$ are well-defined. Thus, the operator $E\to P\left(E\right)$, where
\begin{equation}\label{P_def_eq}
    \left(P\left(E\right)\right)_{uv}=\frac{d\mu_{uv}}{d\mu}\left(E\right)
\end{equation}
is defined $\mu$-almost everywhere. We will use the following result, which is essentially proved in \cite{berezanskiui1968expansions}:
\begin{theorem}\label{ef_exp_thm}
    For $\mu$-almost every $E\in\mathbb{R}$, $P\left(E\right)$ is a Hilbert-Schmidt operator from $\calH_+$ to $\calH_-$. Furthermore, for every Borel set $A\subseteq\mathbb{R}$ and every $\psi\in\calH_+$, we have
    \begin{align*}
        P\left(A\right)\psi=\int\limits_A P\left(E\right)\psi\,d\mu\left(E\right)
    \end{align*}
    In addition, there exists a choice, for $\mu$-almost every $E\in\mathbb{R}$, of a number $N(E)$ and linearly independent vectors $f_1^E,\ldots,f_{N\left(E\right)}^E\in\Ran P\left(E\right)$ such that for $\mu$-almost every $E\in\mathbb{R}$, for every $\psi\in\calH_+$,
    \begin{equation}\label{eq_ef_exp_inner_prod}
        P\left(E\right)\psi=\sum\limits_{i=1}^{N\left(E\right)}\langle\psi,f_i^E\rangle f_i^E.
    \end{equation}
\end{theorem}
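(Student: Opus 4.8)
My plan is to obtain Theorem~\ref{ef_exp_thm} as an adaptation of the nuclear (projection) spectral theorem of Berezanskii \cite{berezanskiui1968expansions,berezansky1996functional} to the rigged Hilbert space $\calH_+\subset\calH\subset\calH_-$ at hand. The entire point of this triple is to make the relevant inclusion Hilbert--Schmidt: since $\calG$ is star-like with $m$ branches, the number of vertices at a given distance from $\calK$ is bounded uniformly, so that $\sum_{v\in\calV}f(v)^{-2}=\sum_{n}\left|\partial B_n(\calK)\right|(n+1)^{-2}<\infty$. Thus the canonical embedding $\calH_+\hookrightarrow\calH_-$ is Hilbert--Schmidt, which is the one analytic hypothesis the abstract machinery needs. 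I would begin by recording this quasinuclearity, together with the fact that $J$ maps $\calH_+$ into itself continuously (it is bounded and changes $\dist(\cdot,\calK)$ by at most one, so it preserves the weighted spaces).

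The construction of $P(E)$ is the core of the argument. Because the set $\calC=\{\delta_v:v\in\calK\}$ is cyclic (equation \eqref{cyclicity_eq}), every $\mu_{uv}\ll\mu$, and since $\calV$ is countable I can fix a single Borel set of full $\mu$-measure on which all Radon--Nikodym derivatives $\frac{d\mu_{uv}}{d\mu}$ exist; assembling them via \eqref{P_def_eq} gives the matrix of $P(E)$. The task is then to show that for $\mu$-a.e.\ $E$ this matrix defines a genuine Hilbert--Schmidt operator $\calH_+\to\calH_-$ and that the integral representation $P(A)\psi=\int_A P(E)\psi\,d\mu(E)$ holds for $\psi\in\calH_+$. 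This is precisely the differentiation of the $\mathcal{L}(\calH_+,\calH_-)$-valued spectral measure $A\mapsto\mathbbm{1}_A(J)|_{\calH_+}$ with respect to the scalar measure $\mu$, and the Hilbert--Schmidt embedding is what guarantees that the resulting density is itself Hilbert--Schmidt for $\mu$-a.e.\ $E$. I expect this to be the main obstacle: the a.e.\ existence of the \emph{operator-valued} density as a Hilbert--Schmidt operator across the rigging, with the requisite measurability in $E$, is the step where Berezanskii's theorem (or a careful adaptation of \cite{levi2024eigenfunction}) must be invoked, rather than a routine scalar differentiation.

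Once $P(E)$ is in hand, the remaining structural properties are soft. Positivity and symmetry of $P(E)$ follow from those of the spectral measures: for finitely supported $c$, the function $E\mapsto\sum_{u,v}\overline{c_u}c_v (P(E))_{uv}$ is the $\mu$-density of the nonnegative measure $\mu_{\varphi}$ with $\varphi=\sum_v c_v\delta_v$, hence nonnegative a.e. The generalized eigenfunction property comes from testing self-adjointness of $J$ against $\mathbbm{1}_A(J)$: for every Borel $A$,
\begin{equation*}
\int_A E\,d\mu_{uv}(E)=\sum_{w\sim v}a_{v,w}\,\mu_{uw}(A)+b_v\,\mu_{uv}(A),
\end{equation*}
and differentiating with respect to $\mu$ yields $E\,(P(E))_{uv}=\sum_{w\sim v}a_{v,w}(P(E))_{uw}+b_v(P(E))_{uv}$ for a.e.\ $E$. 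Fixing $u$, this says exactly that $v\mapsto(P(E))_{uv}$ is a formal solution of $Jf=Ef$; hence every vector in $\Ran P(E)$ is a generalized eigenfunction lying in $\calH_-$.

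Finally, to produce the vectors $f_i^E$ I would diagonalize $P(E)$. As a nonnegative Hilbert--Schmidt (hence compact) operator realized on $\calH_-$, it has a spectral decomposition with positive eigenvalues $\lambda_i(E)$ and orthonormal eigenvectors; absorbing $\sqrt{\lambda_i(E)}$ into the eigenvectors gives vectors $f_i^E\in\Ran P(E)\subset\calH_-$ with $P(E)\psi=\sum_{i}\langle\psi,f_i^E\rangle f_i^E$, which is \eqref{eq_ef_exp_inner_prod}, and $N(E)$ is the number of strictly positive eigenvalues, so the $f_i^E$ are automatically linearly independent. The only care needed here is a measurable choice of $E\mapsto N(E)$ and of the $f_i^E$, which is standard for measurable families of compact self-adjoint operators. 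Thus the three assertions---Hilbert--Schmidt property, integral representation, and the diagonal expansion---are established, with the a.e.\ differentiation step of the second paragraph being the genuine difficulty.
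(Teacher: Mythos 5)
The paper never actually proves this theorem: it is imported from Berezanskii's book with the hedge ``essentially proved in'' \cite{berezanskiui1968expansions}, so your overall route --- adapt the projection spectral theorem for a rigged Hilbert space, check that $J$ respects the rigging, build $P(E)$ from the Radon--Nikodym matrix, and diagonalize the resulting nonnegative kernel --- is exactly the intended one, and your soft steps (positivity, the eigenvalue identity obtained by differentiating $\int_A E\,d\mu_{uv}(E)=\sum_{w\sim v}a_{v,w}\mu_{uw}(A)+b_v\mu_{uv}(A)$, the spectral decomposition producing the $f_i^E$ with a measurable selection) are fine. The genuine gap is in the step you yourself call the crux. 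The abstract machinery does not run on the hypothesis you verify: Berezanskii's theorem requires the embedding $\iota:\calH_+\hookrightarrow\calH$ into the \emph{middle} space to be quasinuclear. Its proof writes $\mathbbm{1}_A(J):\calH_+\to\calH_-$ as $\iota^*\,\mathbbm{1}_A(J)\,\iota$, a composition of two Hilbert--Schmidt maps with a contraction, hence a trace-class-valued measure of finite total variation, and it is this measure that gets differentiated with respect to $\mu$. For the paper's weight $f(v)=\dist(v,\calK)+1$, quasinuclearity of $\iota$ fails, since $\sum_v f(v)^{-1}$ is a harmonic series along each branch. Your weaker observation that the composite $\calH_+\hookrightarrow\calH_-$ is Hilbert--Schmidt cannot substitute for it: that composite has singular values $f(v)^{-1}$, so it is Hilbert--Schmidt but \emph{not} trace class, and since any product of two Hilbert--Schmidt operators is trace class, no choice of intermediate Hilbert space can yield the factorization the proof needs.

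Moreover, with this weight the conclusion itself fails on the absolutely continuous spectrum, so no argument can close the gap as stated: take the single-branch graph $\calV=\mathbb{N}$, $\calK=\{1\}$, and the free Jacobi matrix $a\equiv 1$, $b\equiv 0$, whose $\mu$ is purely a.c.\ on $[-2,2]$. Then $(P(E))_{uv}=p_{u-1}(E)\,p_{v-1}(E)$, where $p_n$ are the orthonormal (Chebyshev) polynomials, $p_n(2\cos\theta)=\sin((n+1)\theta)/\sin\theta$, and for every $E\in(-2,2)$ one has $\sum_u p_{u-1}(E)^2/u=\infty$; hence $P(E)\delta_1\notin\calH_-$, let alone $P(E)$ Hilbert--Schmidt from $\calH_+$ to $\calH_-$. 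This defect is inherited from the paper's statement (it is what ``essentially'' conceals), and it is harmless for the paper's purposes, since everything downstream (Lemma \ref{gen_ef_vanishing_lemma}, Lemma \ref{gen_ef_sub_sol}, Theorem \ref{spect_thm_direct_int}) uses only the expansion and not the particular weight. The correct repair is to take a weight with $\sum_v f(v)^{-1}<\infty$, e.g.\ $f(v)=\left(\dist(v,\calK)+1\right)^2$: then $\iota$ is quasinuclear, Berezanskii's theorem applies verbatim, and the rest of your argument goes through unchanged. As written, however, your reduction verifies a hypothesis the abstract theorem does not accept, for a statement that, with the stated weight, is not true.
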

\begin{remark}
    Note that by the definition of $\calH_\pm$, elements in $\calH_+$ are also in $\ell^2\left(\calG\right)$ while this is not necessarily true for elements in $\calH_-$. We note that the dual of $\calH_+$ is naturally $\calH_-$ with the standard inner product, thus, the inner product inside (\ref{eq_ef_exp_inner_prod}) is well defined, and in particular, a part of Theorem \ref{ef_exp_thm} is the assertion that the sums of the inner product $\sum\limits_{v\in\calV}\overline{\psi\left(v\right)}f_i^E\left(v\right)$ converge $\mu$-almost everywhere.
\end{remark}
Given $E\in\mathbb{R}$, the elements $\left(f_i^E\right)_{i=1}^{N\left(E\right)}$ are called the \textit{generalized eigenfunctions} corresponding with $E$. The operator
\begin{center}
    $\calH_+\ni\varphi\to\hat{\varphi}\left(E\right)\coloneqq\left(\langle \varphi,f_1^E\rangle,\ldots,\langle\varphi,f_{N\left(E\right)}^E\rangle\right)\in\mathbb{C}^{N\left(E\right)}$
\end{center}
is called the \textit{Fourier transform} which corresponds with $J$. We call $N(E)$ the multiplicity at energy $E$. We now turn to the direct integral formulation of the spectral theorem as described in \cite[Chapter 15]{berezansky1996functional}.
\begin{theorem}\label{spect_thm_direct_int}
    Suppose there exists $N\in\mathbb{N}$ such that for $\mu$-almost every $E\in\mathbb{R}$, $N\left(E\right)\leq N$. Then, the Fourier transform can be extended to $\calH$. Furthermore, the operator $U:\calH \rightarrow \bbC^{N(E)}$, given by 
    \begin{align*}
        U\varphi=\hat{\varphi}
    \end{align*}
    forms a unitary equivalence between the action of $J$ on $\calH$ and the action $f\left(E\right)\to E\cdot f\left(E\right)$ on the direct integral $\int\limits^\oplus_\mathbb{R}\mathbb{C}^{N\left(E\right)}\,d\mu\left(E\right)$.

\end{theorem}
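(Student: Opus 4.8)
The plan is to obtain the three assertions—extension of the Fourier transform, the isometry property, and the intertwining—directly from Theorem \ref{ef_exp_thm} together with the cyclicity \eqref{cyclicity_eq}, following the usual route to the direct integral spectral theorem but keeping the generalized eigenfunctions $f_i^E$ explicit throughout.

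First I would prove a Parseval identity on $\calH_+$. For $\varphi,\psi\in\calH_+$ and any Borel $A$, the relation $\mu_{\psi,\varphi}(A)=\langle P(A)\psi,\varphi\rangle=\int_A\langle P(E)\psi,\varphi\rangle\,d\mu(E)$ from Theorem \ref{ef_exp_thm}, combined with \eqref{eq_ef_exp_inner_prod}, gives $\langle P(E)\psi,\varphi\rangle=\sum_{i=1}^{N(E)}\langle\psi,f_i^E\rangle\overline{\langle\varphi,f_i^E\rangle}=\langle\hat\psi(E),\hat\varphi(E)\rangle_{\mathbb{C}^{N(E)}}$. Taking $A=\mathbb{R}$ yields $\langle\psi,\varphi\rangle=\int_\mathbb{R}\langle\hat\psi(E),\hat\varphi(E)\rangle\,d\mu(E)=\langle U\psi,U\varphi\rangle$, so $U$ is isometric on $\calH_+$. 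Since the finitely supported functions lie in $\calH_+$ and are dense in $\calH$, $U$ extends uniquely to an isometry $\calH\to\int\limits^\oplus_\mathbb{R}\mathbb{C}^{N(E)}\,d\mu(E)$; this is the asserted extension, and it is automatically norm-preserving.

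For the intertwining I would work first with finitely supported $\varphi$, so that $J\varphi$ is again finitely supported and hence in $\calH_+$. Applying the Parseval identity to the pairs $(J\varphi,\psi)$ and $(\varphi,\psi)$ and comparing with the functional calculus identity $\mu_{J\varphi,\psi}(A)=\int_A E\,d\mu_{\varphi,\psi}(E)$ gives $\int_A\langle\widehat{J\varphi}(E),\hat\psi(E)\rangle\,d\mu=\int_A E\,\langle\hat\varphi(E),\hat\psi(E)\rangle\,d\mu$ for every Borel $A$; since $A$ is arbitrary and $\psi$ runs over a countable dense family, $\widehat{J\varphi}(E)=E\,\hat\varphi(E)$ for $\mu$-a.e. $E$. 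Because multiplication by $E$ is bounded on the direct integral ($\mathrm{supp}\,\mu\subseteq\sigma(J)$ is compact) and $J$ is bounded, approximating an arbitrary $\varphi\in\calH$ by finitely supported vectors extends the intertwining to all of $\calH$.

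The remaining and hardest point is surjectivity of $U$. The intertwining shows that $U$ conjugates $\mathbbm{1}_B(J)$ into multiplication by $\mathbbm{1}_B$, so $\Ran U$ (which is closed, being an isometric image) is invariant under multiplication by every $L^\infty(\mu)$ function; by the standard decomposition of such invariant subspaces of a direct integral it has the form $\int\limits^\oplus_\mathbb{R}V(E)\,d\mu(E)$ for a measurable field $V(E)\subseteq\mathbb{C}^{N(E)}$. Cyclicity \eqref{cyclicity_eq} and continuity of $U$ identify $\Ran U$ with the closed module generated by $\{\hat\delta_v:v\in\calK\}$, and, since $\widehat{J^k\delta_v}(E)=E^k\hat\delta_v(E)$ lies in $\vspan\{\hat\delta_v(E):v\in\calK\}$, one gets $V(E)=\vspan\{\hat\delta_v(E):v\in\calK\}$ for $\mu$-a.e. $E$. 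It then suffices to show these fibers fill $\mathbb{C}^{N(E)}$. Linear independence of $f_1^E,\dots,f_{N(E)}^E$ as functions on $\calV$ (part of Theorem \ref{ef_exp_thm}) is precisely the statement that $\{\hat\delta_u(E):u\in\calV\}$ spans $\mathbb{C}^{N(E)}$, because $\hat\delta_u(E)=(f_1^E(u),\dots,f_{N(E)}^E(u))$. Finally, writing each $\delta_u$ as a limit of elements of $\vspan\{J^k\delta_v:k\ge0,\,v\in\calK\}$ and using that norm convergence in the direct integral forces a.e.\ fiberwise convergence along a subsequence, I would deduce $\hat\delta_u(E)\in\vspan\{\hat\delta_v(E):v\in\calK\}$ for every $u\in\calV$ and a.e.\ $E$; combined with the previous sentence this forces $\vspan\{\hat\delta_v(E):v\in\calK\}=\mathbb{C}^{N(E)}$, hence $V(E)=\mathbb{C}^{N(E)}$ and $U$ is onto. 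I expect the genuine obstacles to be the module decomposition of closed $L^\infty$-invariant subspaces of the direct integral and the careful interchange of a.e.\ fiberwise limits with the span operation; the Parseval and intertwining steps are essentially bookkeeping with Theorem \ref{ef_exp_thm}.
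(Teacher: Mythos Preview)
The paper does not prove Theorem \ref{spect_thm_direct_int}; it is quoted as a known result from \cite[Chapter 15]{berezansky1996functional}, with the remark that the bound $N(E)\le N$ is imposed only to sidestep technicalities present in the general version there. So there is no in-paper proof to compare your proposal against.

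That said, your sketch is a sound outline of the standard route. The Parseval and intertwining steps are, as you note, bookkeeping with Theorem \ref{ef_exp_thm}. For surjectivity your structure is correct, but the final step can be streamlined: once you have established $\Ran U=\int^\oplus V(E)\,d\mu(E)$, the inclusion $\hat\delta_u(E)\in V(E)$ for every $u\in\calV$ and $\mu$-a.e.\ $E$ follows directly from $U\delta_u\in\Ran U$; the subsequence/fiberwise-limit argument you propose is unnecessary (though not wrong). The genuine technical burden sits exactly where you place it---the decomposition of closed $L^\infty(\mu)$-invariant subspaces of a direct integral as a measurable field of fiber subspaces, together with the measurability of $E\mapsto\vspan\{\hat\delta_v(E):v\in\calK\}$---and this is precisely what the reference to \cite{berezansky1996functional} is meant to absorb.
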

\begin{remark}
    In \cite{berezansky1996functional}, this is proved without requiring $N\left(E\right)$ to be bounded. However, since this is the case in our setting, we added this assumption to avoid some additional technicalities.
\end{remark}
We will need the following
\begin{lemma}\label{gen_ef_vanishing_lemma}
    Let $\psi\in\calH$. Suppose that for some $w\in \calV$, for $\mu$-almost every $E\in\mathbb{R}$, we have that 
    \begin{align*}
        \sum\limits_{i=1}^{N\left(E\right)}\langle\psi,f_i^E\rangle f_i^E\left(w\right)=0
    \end{align*}
    Then $\psi\left(w\right)=0$.
\end{lemma}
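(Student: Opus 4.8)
The plan is to show directly that $\psi(w)=\langle\delta_w,\psi\rangle_{\calH}$ vanishes, reading the hypothesis as the statement that the $w$-component of $P(E)\psi$ is zero for $\mu$-almost every $E$. The starting observation is that $\delta_w\in\calH_+$ (it has single-point support, so its weighted norm is finite), which is what makes $\delta_w$ an admissible test vector and gives $\widehat{\delta_w}(E)_i=\langle\delta_w,f_i^E\rangle=f_i^E(w)$.

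First I would treat the case $\psi\in\calH_+$. Applying Theorem~\ref{ef_exp_thm} with $A=\mathbb{R}$ and using $P(\mathbb{R})=I$ yields the inversion formula $\psi=\int_\mathbb{R}P(E)\psi\,d\mu(E)$, an identity in $\calH_-$. Since point-evaluation at $w$ is a bounded functional on $\calH_-$ (indeed $|h(w)|\le f(w)^{1/2}\|h\|_{\calH_-}$ for $h\in\calH_-$), I may pull it inside the integral to get
\[
\psi(w)=\int_\mathbb{R}(P(E)\psi)(w)\,d\mu(E)=\int_\mathbb{R}\sum_{i=1}^{N(E)}\langle\psi,f_i^E\rangle f_i^E(w)\,d\mu(E),
\]
which is $0$ by hypothesis. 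Note this matches the expression in (\ref{eq_ef_exp_inner_prod}) with no complex conjugation, so no reality assumption on the $f_i^E$ is needed here.

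For a general $\psi\in\calH$ I would argue by density and continuity. Choose $\psi_n\in\calH_+$ with $\psi_n\to\psi$ in $\calH$; since evaluation at $w$ is continuous on $\calH=\ell^2(\calV)$ we have $\psi_n(w)\to\psi(w)$. By Theorem~\ref{spect_thm_direct_int}, which applies because $N(E)$ is uniformly bounded on our star-like graph, the Fourier transform extends to a unitary $U:\calH\to\int^\oplus_\mathbb{R}\mathbb{C}^{N(E)}\,d\mu(E)$, so $\widehat{\psi}_n\to\widehat{\psi}$ in the direct integral. Writing the previous step's formula for each $\psi_n$ and subtracting, the Cauchy-Schwarz inequality on the direct integral together with unitarity gives
\[
\left|\int_\mathbb{R}\sum_{i=1}^{N(E)}\langle\psi_n-\psi,f_i^E\rangle f_i^E(w)\,d\mu(E)\right|\le\|\widehat{\psi}_n-\widehat{\psi}\|\left(\int_\mathbb{R}\sum_{i=1}^{N(E)}|f_i^E(w)|^2\,d\mu(E)\right)^{1/2}=\|\psi_n-\psi\|_{\calH}\,\|\delta_w\|_{\calH},
\]
where $\int_\mathbb{R}\sum_i|f_i^E(w)|^2\,d\mu(E)=\|\widehat{\delta_w}\|^2=\|\delta_w\|_{\calH}^2$ is finite precisely because $\delta_w\in\calH$. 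Hence $\int_\mathbb{R}\sum_i\langle\psi_n,f_i^E\rangle f_i^E(w)\,d\mu(E)\to\int_\mathbb{R}\sum_i\langle\psi,f_i^E\rangle f_i^E(w)\,d\mu(E)=0$, the last equality being the hypothesis, and comparing with $\psi_n(w)\to\psi(w)$ gives $\psi(w)=0$.

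The main obstacle is the passage from $\calH_+$ to all of $\calH$: for $\psi\notin\calH_+$ the coefficients $\langle\psi,f_i^E\rangle$ are only defined through the unitary extension $U$, so one cannot evaluate the inversion formula naively. The point that makes the limit legitimate is that $\widehat{\delta_w}$ has finite norm in the direct integral, so the functional $\widehat{g}\mapsto\int_\mathbb{R}\sum_i\widehat{g}_i(E)f_i^E(w)\,d\mu(E)$ is bounded and commutes with the $\calH$-limit. A minor preliminary to verify is that $N(E)$ is indeed bounded, so that Theorem~\ref{spect_thm_direct_int} is available.
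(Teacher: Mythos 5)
Your proof is correct, and it takes a genuinely different route from the paper's. The paper proves the lemma in a single Parseval computation: by unitarity of the Fourier transform, $\psi(w)=\langle\delta_w,\psi\rangle=\langle\hat{\delta_w},\hat{\psi}\rangle=\int_\mathbb{R}\sum_{i=1}^{N(E)}\langle\psi,f_i^E\rangle\overline{\langle\delta_w,f_i^E\rangle}\,d\mu(E)$, and since this pairing carries a complex conjugate that the hypothesis does not, it then invokes the fact that $f_i^E(w)\in\mathbb{R}$ for $\mu$-almost every $E$ (a consequence of (\ref{P_def_eq}) and the reality of the measures $\mu_{uv}$ and $\mu$) to identify $\overline{\langle\delta_w,f_i^E\rangle}$ with $f_i^E(w)$, after which the integrand is exactly the one in the hypothesis and the integral vanishes. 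You avoid the reality of the generalized eigenfunctions altogether: on $\calH_+$ you read the claim off the inversion formula of Theorem \ref{ef_exp_thm}, where the expansion (\ref{eq_ef_exp_inner_prod}) appears with no conjugation, using boundedness of point evaluation on $\calH_-$; you then pass to all of $\calH$ by density, with unitarity (Theorem \ref{spect_thm_direct_int}) entering only through the bound $|T\hat{g}|\leq\|\hat{g}\|\,\|\hat{\delta_w}\|$ for the linear functional $T\hat{g}=\int_\mathbb{R}\sum_i\hat{g}_i(E)f_i^E(w)\,d\mu(E)$. The paper's route buys brevity and handles all $\psi\in\calH$ in one stroke, at the price of the (easy) reality observation; your route buys independence from that observation --- it would survive verbatim if the $f_i^E$ were complex-valued --- and it makes explicit a point the paper leaves implicit, namely that for $\psi\notin\calH_+$ the coefficients $\langle\psi,f_i^E\rangle$ only make sense through the unitary extension $U$. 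Two small precisions for a final write-up: the identity $\int_\mathbb{R}\sum_i|f_i^E(w)|^2\,d\mu(E)=\|\delta_w\|_\calH^2$ uses $\delta_w\in\calH_+$ (so that $\hat{\delta_w}(E)_i=f_i^E(w)$ holds literally) together with unitarity, not merely $\delta_w\in\calH$; and the interchange of point evaluation with the integral in Theorem \ref{ef_exp_thm} should be justified by noting that the integral converges at least weakly in $\calH_-$, whose pairing against $\delta_w\in\calH_+$ is precisely evaluation at $w$.
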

\begin{proof}
    Recall that the Fourier transform $\psi\to\hat{\psi}$ is unitary. In addition, we will use the fact that for $\mu$-almost every $E\in\mathbb{R}$, for every $1\leq i\leq N\left(E\right)$ and for every $w\in\calV$, $f_i^E\left(w\right)\in\mathbb{R}$. This follows, for example, from (\ref{P_def_eq}) along with the fact that $\mu_{uv}$ and $\mu$ are finite real measures. With that in mind, we have
    \begin{center}
        $\psi\left(w\right)=\langle\delta_w,\psi\rangle=\langle\hat{\delta_w},\hat{\psi}\rangle=\int\limits_\mathbb{R}\sum\limits_{k=1}^{N\left(E\right)}\langle\psi,f_i^E\rangle\overline{\langle\delta_w,f_i^E\rangle}\, d\mu\left(E\right)=\int\limits_\mathbb{R}\sum\limits_{k=1}^{N\left(E\right)}\langle\psi,f_i^E\rangle f_i^E\left(w\right)\, d\mu\left(E\right)=0$,
    \end{center}
    as required.
\end{proof}
\subsection{Subordinacy theory}
Let us briefly consider a Jacobi operator on $\ell^2\left(\mathbb{N}\right)$. To avoid confusion, we will denote it by $J^0$.  In that case, the vector $\delta_1$ is cyclic for $J^0$ and so $\mu^0=\mu_{\delta_1}^0$. Subordinacy theory relates local singularity properties of $\mu^0$ to certain asymptotic properties of solutions to the eigenvalue equation. For the following, we may consider 
\begin{align*}
    \calF=\{f:\bbN\cup \{0\}\rightarrow\bbR\}
\end{align*}
To that end, given $E\in\mathbb{R}$, we consider $g\in \calF$ which satisfies
\begin{equation}\label{ev_eq_half_line}
    a_{n,n-1}g\left(n-1\right)+a_{n+1,n}g\left(n\right)+b_ng\left(n\right)=Eg\left(n\right),\,\,\,\,n\in\mathbb{N},
\end{equation}
where $a_{1,0}=0$. Given $\theta\in\left[0,\pi\right)$, we denote by $g_\theta^E$ the unique solution to (\ref{ev_eq_half_line}) which also satisfies
\begin{equation}\label{bcon_eq}
    \left(g_\theta^E\left(0\right),g_\theta^E\left(1\right)\right)=\left(-\sin\theta,\cos\theta\right).
\end{equation}
Given $L\geq 1$ and $g:\mathbb{N}\to\mathbb{R}$, we denote
\begin{center}
    $\|g\|_L=\left(\sum\limits_{k=1}^{\lfloor L\rfloor}\left|g\left(k\right)\right|^2+\left(L-\lfloor L\rfloor\right)\left|g\left(\lfloor L\rfloor +1\right)\right|^2\right)^{\frac{1}{2}}$.
\end{center}
\begin{definition}
    Given $E\in\mathbb{R}$ and $\theta\in\left[0,\pi\right)$, $g_\theta^E$ is called \textit{subordinate} if for every $\theta\neq\eta\in\left[0,\pi\right)$,
    \begin{align}\label{subordinateEq}
        \underset{L\to\infty}{\lim}\frac{\left\|g_\theta^E\right\|_L}{\left\|g_\eta^E\right\|_L}=0
    \end{align}
\end{definition}

\begin{remark}
    \begin{enumerate}
        \item We note that one may consider the definition of subordinate solutions using the language of weak solutions, which is more commonly used in the realm of operators with an underlying continuous space. We may consider the dual of $\calF$, the space of compactly supported functions:
    \begin{align*}
        &F=\{f:\bbN\cup \{0\}\rightarrow \bbR\mid \#\{v\in \supp f\}<\infty \}.
    \end{align*}
     Then we call $g\in \calF $ a weak solution to the equation $Jg=Eg$ if for any $\psi \in F$ we have that 
    \begin{align*}
        \braket{J\psi, g}=E\braket{\psi,g}
    \end{align*}
    In this language, we see that a subordinate solution is a weak solution that is an asymptotic minimizer with respect to $\|\cdot \|_L$, in the sense of (\ref{subordinateEq}). Though there are some advantages to this language, it will not be used here in order to avoid confusion.
    \item Let $E\in\mathbb{R}$ and let $g$ be a solution to (\ref{ev_eq_half_line}) which does not necessarily satisfy (\ref{bcon_eq}). Denote $\widetilde{g}=\left(g\left(0\right),g\left(1\right)\right)$. We will say that $g$ is subordinate if and only if $\frac{g}{\|\widetilde{g}\|}$ is subordinate. 
    \end{enumerate} 
\end{remark}
The following can be easily proven using standard rank-one perturbation arguments and \cite[Theorem 1.2]{jitomirskaya1999power} (see also \cite[Chapter 2]{damanik2022one}).
\begin{lemma}\label{subordinacy_uniqueness}
    Let $J^0$ be a Jacobi operator on $\ell^2\left(\mathbb{N}\right)$. Let $E\in\mathbb{R}$ and suppose that $g_1,g_2$ are both subordinate solutions to (\ref{ev_eq_half_line}). Then there exists (a unique) $\lambda\in\mathbb{R}$ such that $g_1=\lambda g_2$. In other words, a subordinate solution to (\ref{ev_eq_half_line}) is unique up to scalar multiplication.
\end{lemma}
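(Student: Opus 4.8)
The plan is to argue directly from the definition of subordinacy, avoiding any spectral machinery. By the second part of the remark following the definition, each $g_i$ is subordinate if and only if $g_i/\|\widetilde{g_i}\|$ is, so I may assume from the outset that $g_1=g_{\theta_1}^E$ and $g_2=g_{\theta_2}^E$ for suitable $\theta_1,\theta_2\in[0,\pi)$; here I use that the map $\theta\mapsto(-\sin\theta,\cos\theta)$ is a bijection from $[0,\pi)$ onto the set of one-dimensional subspaces of $\mathbb{R}^2$, so that a choice of $\theta$ is exactly a choice of initial direction. Under this reduction, $g_1$ and $g_2$ fail to be scalar multiples of one another precisely when $\theta_1\neq\theta_2$.

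Suppose, for contradiction, that $\theta_1\neq\theta_2$. First I would record that the ratios appearing in (\ref{subordinateEq}) are well defined for all large $L$: since $a>0$, a nontrivial solution of (\ref{ev_eq_half_line}) cannot vanish at two consecutive sites, hence cannot be eventually zero, so each $\|g_{\theta_i}^E\|_L$ is positive and nondecreasing once $L$ is large. Now apply the definition of subordinacy twice. Because $g_1=g_{\theta_1}^E$ is subordinate, taking $\eta=\theta_2$ gives $\|g_1\|_L/\|g_2\|_L\to 0$; because $g_2=g_{\theta_2}^E$ is subordinate, taking $\eta=\theta_1$ gives $\|g_2\|_L/\|g_1\|_L\to 0$. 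The product of these two ratios equals $1$ for every large $L$, so its limit is $1$; but the product of two sequences each tending to $0$ tends to $0$. This contradiction forces $\theta_1=\theta_2$, i.e.\ $g_1$ and $g_2$ are proportional. Uniqueness of the scalar $\lambda$ with $g_1=\lambda g_2$ is immediate since $g_2\not\equiv 0$.

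I expect the only real subtlety — and hence the step to handle with some care — to be the reduction in the first paragraph together with the well-definedness of the ratios, both of which are elementary but genuinely use the positivity of $a$ and the bijectivity of the $\theta$-parametrization. I note that one could instead follow the route suggested by the stated references: viewing the family $\{J_\theta\}_\theta$ of half-line operators with varying boundary phase as a rank-one perturbation family, subordinacy theory (in the quantitative form of \cite[Theorem 1.2]{jitomirskaya1999power}) identifies, at energy $E$, the subordinate boundary phase with the singular behavior of the associated $m$-functions, and the Aronszajn--Donoghue mutual singularity of the singular parts of the $\mu_\theta$ then yields uniqueness of the subordinate solution. However, that argument most naturally produces an almost-everywhere statement, whereas the direct comparison above gives the pointwise conclusion asserted in the lemma with no measure-theoretic input, so I would present the elementary argument as the main proof.
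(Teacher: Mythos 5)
Your proof is correct, and it takes a genuinely different (and more elementary) route than the paper. The paper does not write out an argument at all: it invokes ``standard rank-one perturbation arguments'' together with \cite[Theorem 1.2]{jitomirskaya1999power}, i.e.\ it identifies subordinacy of $g_\theta^E$ with divergence of the boundary value of the Weyl $m$-function for boundary condition $\theta$, and then uses the M\"obius (rank-one) relation between $m$-functions of different boundary conditions to conclude that at most one $\theta$ can carry a subordinate solution. Your symmetric product argument --- if $g_{\theta_1}^E$ and $g_{\theta_2}^E$ were both subordinate with $\theta_1\neq\theta_2$, then $\|g_{\theta_1}^E\|_L/\|g_{\theta_2}^E\|_L\to 0$ and $\|g_{\theta_2}^E\|_L/\|g_{\theta_1}^E\|_L\to 0$, while their product is identically $1$ --- extracts uniqueness directly from the definition (\ref{subordinateEq}) with no spectral input, and it gives exactly the pointwise-in-$E$ statement the lemma asserts. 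The reduction via the bijection $\theta\mapsto$ (initial direction) is also sound; the only gloss is that $g_i/\|\widetilde{g_i}\|$ may equal $-g_{\theta_i}^E$ rather than $g_{\theta_i}^E$, which is harmless because $\|\cdot\|_L$ is unchanged by a sign flip. As for what each approach buys: yours is self-contained and shorter, while the paper's route, though heavier than needed for bare uniqueness, is the machinery that actually connects subordinacy to spectral measures, which is what the rest of the paper relies on.

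Two small caveats, neither of which invalidates your proof. First, your positivity claim for the ratios uses that \emph{all} off-diagonal coefficients, including $a_{1,0}$, are positive, whereas (\ref{ev_eq_half_line}) as written sets $a_{1,0}=0$; under that literal convention the solution $g_{\pi/2}^E$ vanishes identically on $\mathbb{N}$ and some ratios are $0/0$. This is really an internal inconsistency of the paper (Corollary \ref{UniqueScalar} applies the lemma to restrictions of graph solutions, where the coupling at the joint is positive), and your argument survives in either convention: if $\|g_{\theta_2}^E\|_L$ vanishes for all $L$, then no $g_{\theta_1}^E$ with $\theta_1\neq\theta_2$ can satisfy the defining limit against $\eta=\theta_2$, so it cannot be subordinate, and proportionality again follows. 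Second, your closing remark that the $m$-function route ``most naturally produces an almost-everywhere statement'' is not quite accurate: \cite[Theorem 1.2]{jitomirskaya1999power} is a pointwise estimate, and the M\"obius-transformation argument it feeds is pointwise in $E$; only the Aronszajn--Donoghue formulation you sketch is measure-theoretic.
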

Let us now return to the context of a Jacobi operator on a star-like graph $\calG$, with $m$ branches. In this case, we will consider functions $g:\calV\to\mathbb{R}$, which satisfy
\begin{equation}\label{ev_eq_sg}
    \sum_{w\in \partial B_1\left(v\right)} a_{v,w} g(w)+b_vg(v)=Eg\left(v\right),\,\,\,\,v\in V.
\end{equation}
\begin{definition}
    A solution $g$ to (\ref{ev_eq_sg}) is called subordinate if
    \begin{enumerate}
        \item $g\not \equiv 0$.
        \item The restriction of $g$ to each half-line is either subordinate or identically equal to $0$.
    \end{enumerate}
\end{definition}
An immediate corollary of Lemma \ref{subordinacy_uniqueness} is the following.
\begin{corollary}\label{UniqueScalar}
    If $g$ is a subordinate solution with energy $E$ on a general star-like graph with $m$ branches, then for every $1\leq i\leq m$, there exists a unique scalar $\lambda_i \in \bbR$ such that 
    \begin{align*}
        \forall n\in \bbN, g(\varphi_i(n))=\lambda_i g_i(n)
    \end{align*}
    where $g_i$ is the unique subordinate solution of $J_i^0$ - the restriction of $J$ to $L_i$ - which satisfies the boundary condition (\ref{bcon_eq})  for some $\theta \in [0,\pi)$.
\end{corollary}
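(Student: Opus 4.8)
The plan is to reduce the statement to a branch-by-branch application of Lemma~\ref{subordinacy_uniqueness}. Fix a branch index $1\le i\le m$ and define the restriction $h_i:\mathbb{N}\cup\{0\}\to\mathbb{R}$ by $h_i(n)=g(\varphi_i(n))$. The first step is to check that $h_i$ solves the half-line eigenvalue equation (\ref{ev_eq_half_line}) attached to $J_i^0$. This follows directly from property (1) in the definition of a star-like graph: for every $n\ge 1$ we have $B_1(\varphi_i(n))=\{\varphi_i(n-1),\varphi_i(n+1)\}$, so evaluating (\ref{ev_eq_sg}) at $v=\varphi_i(n)$ yields exactly the three-term recursion (\ref{ev_eq_half_line}) with the coefficients $a_{\varphi_i(n),\varphi_i(n\pm1)}$ and $b_{\varphi_i(n)}$ of the restricted operator. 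In particular no edge leaving $L_i$ ever enters these equations, so $h_i$ is a genuine solution of the half-line problem, with its initial data $(h_i(0),h_i(1))$ fixing the branch on which the comparison with $g_i$ takes place.

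Next I would invoke the hypothesis that $g$ is a subordinate solution on $\calG$. By definition its restriction to each half-line is either subordinate or identically zero, so $h_i$ falls into exactly one of these two cases. If $h_i\equiv 0$, then $g(\varphi_i(n))=0$ for all $n$ and I take $\lambda_i=0$; since $g_i$ is a nontrivial subordinate solution, $\lambda_i=0$ is the only scalar with $\lambda_i g_i\equiv 0$, giving uniqueness (and the choice of reference $g_i$ is immaterial on such a branch). If instead $h_i$ is subordinate, then $h_i$ and $g_i$ are both subordinate solutions of (\ref{ev_eq_half_line}) for $J_i^0$, its nontriviality guarantees that $g_i$ exists, and Lemma~\ref{subordinacy_uniqueness} supplies a unique $\lambda_i\in\mathbb{R}$ with $h_i=\lambda_i g_i$, that is, $g(\varphi_i(n))=\lambda_i g_i(n)$ for every $n$. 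Running this over all $i$ completes the argument.

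The content of the corollary is therefore carried almost entirely by Lemma~\ref{subordinacy_uniqueness}; the only genuine point requiring care is the verification that $h_i$ is a half-line solution in the precise normalization used to define $g_i$. The subtlety is the treatment of the boundary vertex $\varphi_i(0)\in\calK$: in (\ref{ev_eq_sg}) this vertex may have further neighbors inside $\calK$, whereas in (\ref{ev_eq_half_line}) the site $0$ is decoupled through the convention $a_{1,0}=0$. I expect this to be harmless, since (\ref{ev_eq_half_line}) is imposed only for $n\in\mathbb{N}$, the value $h_i(0)=g(\varphi_i(0))$ enters solely as boundary data exactly as $(-\sin\theta,\cos\theta)$ does in (\ref{bcon_eq}), and subordinacy together with the proportionality constant depends only on the tail $(h_i(n))_{n\ge 1}$. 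Making this identification explicit is the one place where I would write the details out carefully.
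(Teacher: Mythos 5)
Your proof is correct and follows essentially the same route as the paper, which offers no written proof at all: the corollary is presented there as an ``immediate corollary of Lemma \ref{subordinacy_uniqueness}'', and your argument --- restricting $g$ branch by branch, invoking the dichotomy (restriction identically zero versus subordinate) built into the definition of subordinate solutions on star-like graphs, and applying Lemma \ref{subordinacy_uniqueness} to produce the unique scalar on each branch --- is exactly that intended argument made explicit. The boundary subtlety you flag at $\varphi_i(0)$ is real but is an artifact of the paper's half-line normalization ($a_{1,0}=0$ in (\ref{ev_eq_half_line})) rather than a gap in your reasoning; under the reading consistent with the rest of the paper (the eigenvalue equation on the branch is imposed only for $n\geq 1$, with $g(\varphi_i(0))$ entering as free boundary data and the actual edge weight $a_{\varphi_i(1),\varphi_i(0)}>0$ coupling it to the branch), your treatment is the right one.
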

The following was proved in \cite{levi2023subordinacy}.
\begin{lemma}
    Given $E\in\mathbb{R}$, denote by $\calS\left(E\right)$ the set of all subordinate solutions corresponding with $E$. Then, $\calS\left(E\right)$ is a finite-dimensional vector space. Furthermore, for $\mu_s$-almost every $E\in\mathbb{R}$ we have
    \begin{equation}\label{dimension_sub_sol_multiplicity}
        N\left(E\right)\leq\dim\calS\left(E\right).
    \end{equation}
\end{lemma}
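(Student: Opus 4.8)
The plan is to prove the two assertions separately: that $\calS(E)$ is a finite-dimensional vector space, and that it bounds $N(E)$ from above for $\mu_s$-almost every $E$.

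For the first assertion, I would first observe that $\calS(E)\cup\{0\}$ is closed under linear combinations. By Corollary \ref{UniqueScalar}, the restriction of any subordinate solution to a half-line $L_i$ is a scalar multiple of the single solution $g_i$, so a linear combination of subordinate solutions again restricts on each $L_i$ to a multiple of $g_i$ (hence subordinate or zero) while still solving (\ref{ev_eq_sg}); thus it is subordinate or identically zero. Finite dimensionality then follows by showing that the restriction map $g\mapsto g|_\calK$ is injective on $\calS(E)$. Indeed, if $g|_\calK=0$, then evaluating (\ref{ev_eq_sg}) at a junction vertex $\varphi_i(0)\in\calK$ --- all of whose neighbors lie in $\calK\cup\{\varphi_i(1)\}$ --- gives $a_{\varphi_i(0),\varphi_i(1)}\,g(\varphi_i(1))=0$, so $g(\varphi_i(1))=0$ because $a>0$; the half-line recurrence with two consecutive zeros then forces $g\equiv 0$ on each $L_i$, hence $g\equiv 0$. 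Consequently $\dim\calS(E)\le|\calK|<\infty$.

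For the multiplicity bound, recall from Theorem \ref{ef_exp_thm} that $N(E)=\dim\Ran P(E)$ with a basis of generalized eigenfunctions $f_1^E,\dots,f_{N(E)}^E\in\Ran P(E)\subseteq\calH_-$. The strategy is to prove that, for $\mu_s$-almost every $E$, every such $f_i^E$ lies in $\calS(E)$; since they are linearly independent, this immediately gives $N(E)\le\dim\calS(E)$. Two ingredients are needed. First, each $f_i^E$ solves (\ref{ev_eq_sg}): this is the intertwining property of the expansion, namely that $J$ commutes with each $P(A)=\mathbbm{1}_A(J)$, which yields $JP(E)=E\,P(E)$ for $\mu$-almost every $E$, and unwinding this through (\ref{eq_ef_exp_inner_prod}) (in the spirit of Lemma \ref{gen_ef_vanishing_lemma}) gives $(J-E)f_i^E=0$ pointwise on $\calV$. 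In particular $g(n):=f_i^E(\varphi_j(n))$ solves the half-line equation (\ref{ev_eq_half_line}) on each branch $L_j$.

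The main obstacle is the second ingredient: showing that for $\mu_s$-almost every $E$ this half-line restriction is subordinate or zero. Here I would combine the weighted-space membership $f_i^E\in\calH_-$ with the Wronskian identity. The membership means $\sum_n |g(n)|^2/(n+1)<\infty$, and by Kronecker's lemma this forces $\|g\|_L^2/L\to 0$. On the other hand, for any solution $h$ independent of $g$, the Wronskian $a_{n+1,n}\bigl(g(n)h(n+1)-g(n+1)h(n)\bigr)$ is a nonzero constant, so summing its modulus over $n\le L$ and applying Cauchy--Schwarz gives $\|g\|_L\,\|h\|_L\ge cL$; combined with $\|g\|_L^2=o(L)$, this yields $\|g\|_L/\|h\|_L\to 0$, which is precisely subordinacy of $g$. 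The delicate point is that the decay $\|g\|_L^2=o(L)$ is genuinely tied to the singular part of the spectrum --- it is on $\mu_s$ that the spectral weight concentrates on the subordinate solution (equivalently, that the required $\calH_-$-decay holds), and establishing this rigorously is where one must invoke subordinacy theory through \cite[Theorem 1.2]{jitomirskaya1999power}, relating the local growth ratio of solutions to the boundary behavior of the associated $m$-functions. Granting this, every $f_i^E$ is subordinate on each half-line, so $f_i^E\in\calS(E)$, and the bound (\ref{dimension_sub_sol_multiplicity}) follows.
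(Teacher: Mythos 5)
Your proof of the first assertion is correct and complete: closure of $\calS(E)\cup\{0\}$ under linear combinations follows from Corollary \ref{UniqueScalar} exactly as you say, and injectivity of the restriction map $g\mapsto g|_{\calK}$ (using the eigenvalue equation at $\varphi_i(0)$, all of whose neighbors lie in $\calK\cup\{\varphi_i(1)\}$, and then the three-term recurrence on the branch) gives $\dim\calS(E)\le\left|\calK\right|$. For context: the paper does not prove this lemma at all, it quotes it from \cite{levi2023subordinacy}; the closest in-paper analogue is Lemma \ref{gen_ef_sub_sol}, and your strategy for the second assertion is exactly that lemma's strategy (show that the generalized eigenfunctions are subordinate solutions, then use their linear independence). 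Your first ingredient, that each $f_i^E$ solves (\ref{ev_eq_sg}) for $\mu$-a.e.\ $E$, is also fine; the paper obtains it from Poltoratski's theorem (Lemma \ref{pol_thm}) and a resolvent identity, and your intertwining argument is a standard alternative.

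The gap is in your second ingredient, and it is not merely a technicality you may defer. Your mechanism --- $f_i^E\in\calH_-$, hence $\left\|g\right\|_L^2=o(L)$ by Kronecker's lemma, hence subordinacy by constancy of the Wronskian --- makes no reference whatsoever to the singular part of $\mu$: if it were valid, it would show that generalized eigenfunctions are subordinate (or vanish) on each branch for $\mu$-almost every $E$, including in the absolutely continuous spectrum. That conclusion is false: for the free Jacobi matrix ($a\equiv 1$, $b\equiv 0$, one branch, $\calK$ the first vertex) the spectrum is purely absolutely continuous on $[-2,2]$, there are \emph{no} subordinate solutions there, yet the generalized eigenfunctions $f^E(n)=p_n(E)\sim\sin(n\theta)/\sin\theta$ are nonzero. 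What fails is the input you take from Theorem \ref{ef_exp_thm}: with the borderline weight $f(v)=\dist(v,\calK)+1$, the free Jacobi matrix shows that $P(E)$ does not even map $\calH_+$ into $\calH_-$ for Lebesgue-a.e.\ $E\in[-2,2]$, since $\sum_n\sin^2(n\theta)/(n+1)=\infty$; Berezanskii-type expansion theorems require a quasi-nuclear embedding, i.e.\ $\sum_v 1/f(v)<\infty$, e.g.\ $f(v)=(1+\dist(v,\calK))^{1+\epsilon}$. With such an admissible weight, Kronecker's lemma only yields $\left\|g\right\|_L^2=o\left(L^{1+\epsilon}\right)$, and combining this with the Wronskian bound $\left\|g\right\|_L\left\|h\right\|_L\ge cL$ gives $\left\|g\right\|_L/\left\|h\right\|_L=o\left(L^{\epsilon}\right)$, which does not tend to $0$. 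So the decay you need is genuinely a $\mu_s$-a.e.\ statement; it is precisely the content of Lemma \ref{subordinate_solution_initial_values} (\cite[Remark 3.2]{levi2023subordinacy}), whose proof runs through the Gilbert--Pearson/Jitomirskaya--Last connection \cite{jitomirskaya1999power} between blow-up of Borel transforms and subordinacy --- a statement about the whole graph and the Radon--Nikodym data on $\calK$, not just about a single half-line. Since your proposal invokes this step only as ``granting this,'' the inequality (\ref{dimension_sub_sol_multiplicity}) is not actually established: as written, your argument either proves too much, or, with a corrected weight, does not close.
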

\subsection{The Borel transform}
Let $\rho$ be a finite real Borel measure. The Borel transform of $\rho$ is given by
\begin{center}
    $\calM_\rho\left(z\right)=\int\limits_\mathbb{R}\frac{d\rho\left(x\right)}{x-z}$,
\end{center}
where $z\in\mathbb{C}_+\coloneqq\left\{z\in\mathbb{C}:\text{Im} z>0\right\}$. It is an analytic function which maps $\mathbb{C}_+$ to itself. The boundary behavior of $\calM_\rho$ is strongly connected to various properties of the measure $\rho$. In particular, we will use the following, which was proved in \cite{poltoratskii1994boundary}.
\begin{lemma}\label{pol_thm}
    Let $\rho$ and $\sigma$ be finite positive measures. Assume that $\rho\ll\sigma$. Then for $\sigma_s$-almost every $E\in\mathbb{R}$,
    \begin{center}
        $\underset{\epsilon\to0}{\lim}\,\frac{\calM_\rho\left(E+i\epsilon\right)}{\calM_\sigma\left(E+i\epsilon\right)}=\frac{d\rho}{d\sigma}\left(E\right)$.
    \end{center}
\end{lemma}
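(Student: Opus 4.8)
The plan is to prove this entirely within the theory of Cauchy (Borel) transforms, treating it as a version of Poltoratskii's theorem and following the standard route through the normalized Cauchy transform. First I would reduce to a vanishing statement. Writing $f:=\frac{d\rho}{d\sigma}\in L^1(\sigma)$, so that $\calM_\rho=\calM_{f\sigma}$, the assertion becomes $\calM_{f\sigma}(E+i\epsilon)/\calM_\sigma(E+i\epsilon)\to f(E)$ for $\sigma_s$-a.e.\ $E$. By the Besicovitch differentiation theorem, $\sigma$-a.e.\ point $E$ is a Lebesgue point of $f$ with respect to $\sigma$; fixing such an $E$ and subtracting $f(E)\calM_\sigma$, we have $\calM_{f\sigma}-f(E)\calM_\sigma=\calM_{g\sigma}$ with $g:=f-f(E)$, so it suffices to show $\calM_{g\sigma}(E+i\epsilon)=o\!\left(|\calM_\sigma(E+i\epsilon)|\right)$ as $\epsilon\to0$, exploiting that $g$ has vanishing $\sigma$-Lebesgue value at $E$.

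Two elementary ingredients come next. On the one hand, $\sigma_s$-a.e.\ point $E$ has infinite symmetric $\sigma$-derivative, and since $\text{Im}\,\calM_\sigma(E+i\epsilon)=\int\frac{\epsilon\,d\sigma(x)}{(x-E)^2+\epsilon^2}$ is (up to a constant) the Poisson integral of $\sigma$, this forces $\text{Im}\,\calM_\sigma(E+i\epsilon)\to\infty$, and in particular $|\calM_\sigma(E+i\epsilon)|\to\infty$. On the other hand, the numerator's contribution from $\{|x-E|>\delta\}$ is bounded by $\delta^{-1}\|g\|_{L^1(\sigma)}$ uniformly in $\epsilon$, hence is annihilated upon dividing by the diverging denominator. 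Everything thus reduces to controlling the near contribution $\int_{|x-E|\le\delta}\frac{g(x)}{x-E-i\epsilon}\,d\sigma(x)$.

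This near contribution is the genuine obstacle, and it is exactly where the singularity of $\sigma$ must be used. A naive bound by $\int_{|x-E|\le\delta}\frac{|g(x)|}{\sqrt{(x-E)^2+\epsilon^2}}\,d\sigma(x)$ fails: the Cauchy kernel $|x-E-i\epsilon|^{-1}$ has a far heavier tail than the Poisson kernel $\epsilon(|x-E|^2+\epsilon^2)^{-1}$, so a dyadic decomposition into annuli around $E$ produces geometrically growing factors that the Lebesgue-point estimate alone cannot absorb. The oscillation (the phase) of the Cauchy kernel must therefore be retained. To capture this cancellation I would invoke the Aleksandrov--Clark picture: after a Cayley transform the Herglotz function $\calM_\sigma$ produces an inner function $\Theta$, the normalized Cauchy transform $V_\sigma h:=\calM_{h\sigma}/\calM_\sigma$ maps $L^2(\sigma)$ unitarily onto the model space $K_\Theta$, and $\sigma_s$ is carried by the boundary set on which $\Theta$ has unimodular nontangential limits. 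The heart of the matter is the boundary behavior of functions in $K_\Theta$ (pseudocontinuation theory), which yields that $V_\sigma h$ recovers its defining density $h$ at $\sigma_s$-a.e.\ point; applied to $h=g$ this gives $\calM_{g\sigma}/\calM_\sigma\to g(E)=0$ and closes the argument.

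I expect this boundary-value step — showing that model-space functions recover their density $\sigma_s$-a.e., i.e.\ the mechanism supplying the missing cancellation in the intermediate region $\epsilon\ll|x-E|\le\delta$ — to be by far the main difficulty; the reductions and the denominator blow-up are routine. For the present application one may also note that only the singular part of $\sigma$ matters near a $\sigma_s$-typical point, so the argument can be localized to singular measures, where the Clark construction is cleanest.
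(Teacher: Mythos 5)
The paper does not actually prove this lemma: it is Poltoratskii's theorem, quoted directly from \cite{poltoratskii1994boundary}, so there is no internal argument to compare yours against. Your sketch is a faithful outline of how the cited theorem is proved in the literature, and your reductions are all sound: the passage to a vanishing statement at $\sigma$-Lebesgue points (Besicovitch differentiation), the blow-up $|\calM_\sigma(E+i\epsilon)|\to\infty$ at $\sigma_s$-a.e.\ point via the Poisson kernel and the infinite symmetric derivative, the $\delta^{-1}\|g\|_{L^1(\sigma)}$ bound on the far region, and --- importantly --- the correct diagnosis that no absolute-value estimate can close the near region, so that the oscillation of the Cauchy kernel is essential. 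That last observation is exactly why this theorem is genuinely hard and why it fails off the singular part.

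As a standalone proof, however, the proposal has a gap of a specific kind: the statement you ``invoke'' --- that the normalized Cauchy transform $V_\sigma h=\calM_{h\sigma}/\calM_\sigma$ recovers its density $h$ at $\sigma_s$-a.e.\ point --- is not an auxiliary fact from pseudocontinuation theory; it \emph{is} Poltoratskii's theorem (for $L^2(\sigma)$ densities), i.e.\ the main result of the very paper the lemma is cited from. So you have correctly located the core of the argument but not supplied it. Two further points would need attention even granting Clark theory as a black box: (i) your $g=f-f(E)$ lies only in $L^1(\sigma)$, while the Clark unitarity $V_\sigma:L^2(\sigma)\to K_\Theta$ lives on $L^2$; passing from $L^2$ to $L^1$ densities in this theorem requires an additional argument (a weak-type maximal estimate for $V_\sigma$ or a direct $L^1$ pseudocontinuation argument), not just truncation. (ii) The function $\Theta$ produced by the Cayley transform is inner if and only if $\sigma$ is singular; for general $\sigma$ one must work with de Branges--Rovnyak spaces rather than $K_\Theta$, and your proposed ``localization to the singular part'' is itself a nontrivial estimate, since the absolutely continuous part of $\sigma$ still enters both numerator and denominator near a $\sigma_s$-typical point. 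None of this makes your outline wrong as a description of the known proof, and it is no less rigorous than the paper's own treatment (a bare citation); but it should be read as correct reductions plus a citation of the hard step, not as a proof.
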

We note that in the case where $\rho=\mu_{u,v}$, for some $u,v \in \calV$, we have that 
\begin{align*}
    \calM_\rho(E+i\epsilon)=\braket{R(E+i\epsilon)\delta_v,\delta_u}
\end{align*}
where $R(E+i\epsilon)=(J-(E+i\epsilon))^{-1}$ is the resolvent, which is bounded operator on $\ell^2(\calG)$ for any $\epsilon>0$. \par
We will also need the following two lemmas.
\begin{lemma}\label{m_func_bdd_by_epsilon}
    Let $\mu$ be a finite Borel measure and let $\calM$ be its Borel transform. Then for $\mu$-almost every $E\in\mathbb{R}$ there exists a constant $C>0$ such that for sufficiently small $\epsilon>0$, $\epsilon\cdot\left|\calM\left(E+i\epsilon\right)\right|<C$.
\end{lemma}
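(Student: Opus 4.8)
The plan is to bound $\epsilon\left|\calM\left(E+i\epsilon\right)\right|$ by a completely elementary argument that avoids any boundary-value theory and in fact produces a constant uniform in both $E$ and $\epsilon$. First I would write the Borel transform at $E+i\epsilon$ and pull the modulus inside the integral via the triangle inequality:
\begin{align*}
    \left|\calM\left(E+i\epsilon\right)\right|=\left|\int\limits_\mathbb{R}\frac{d\mu\left(x\right)}{x-E-i\epsilon}\right|\leq\int\limits_\mathbb{R}\frac{d\mu\left(x\right)}{\left|x-E-i\epsilon\right|}=\int\limits_\mathbb{R}\frac{d\mu\left(x\right)}{\sqrt{\left(x-E\right)^2+\epsilon^2}}.
\end{align*}
The key observation is then the pointwise inequality $\frac{\epsilon}{\sqrt{\left(x-E\right)^2+\epsilon^2}}\leq1$, valid for all $x,E\in\mathbb{R}$ and all $\epsilon>0$, which lets me absorb the factor of $\epsilon$:
\begin{align*}
    \epsilon\left|\calM\left(E+i\epsilon\right)\right|\leq\int\limits_\mathbb{R}\frac{\epsilon}{\sqrt{\left(x-E\right)^2+\epsilon^2}}\,d\mu\left(x\right)\leq\int\limits_\mathbb{R}d\mu\left(x\right)=\mu\left(\mathbb{R}\right).
\end{align*}
Hence the conclusion holds with the single constant $C=\mu\left(\mathbb{R}\right)+1$ for \emph{every} $E\in\mathbb{R}$ and every $\epsilon>0$, which is strictly stronger than the stated ``$\mu$-almost every $E$, sufficiently small $\epsilon$''. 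If one only assumes $\mu$ to be a finite signed or complex measure, the same computation applied to the total variation $\left|\mu\right|$ gives $C=\left|\mu\right|\left(\mathbb{R}\right)+1$.

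Since this is merely the triangle inequality followed by a one-line pointwise bound, I do not expect any genuine obstacle; the only point requiring a little care is to keep the estimate at the level of absolute values rather than attempting to track the full complex quantity $\epsilon\calM\left(E+i\epsilon\right)$. For completeness I would remark that a sharper statement is available: splitting $\epsilon\calM\left(E+i\epsilon\right)$ into real and imaginary parts and applying dominated convergence (the kernels $\frac{\epsilon\left(x-E\right)}{\left(x-E\right)^2+\epsilon^2}$ and $\frac{\epsilon^2}{\left(x-E\right)^2+\epsilon^2}$ are uniformly bounded and converge pointwise to $0$ and to $\mathbbm{1}_{\left\{E\right\}}$ respectively) yields $\lim_{\epsilon\to0}\epsilon\calM\left(E+i\epsilon\right)=i\mu\left(\left\{E\right\}\right)$ for every $E$. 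This limit form also implies the lemma, but invoking it is unnecessary, since the crude uniform bound above already more than suffices for the applications in the sequel.
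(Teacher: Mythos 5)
Your proof is correct and is essentially the same argument as the paper's: both multiply the kernel by $\epsilon$ and use a trivial pointwise bound to get a constant uniform in $E$ and $\epsilon$ (strictly stronger than the stated ``a.e.\ $E$, small $\epsilon$'' form). The only cosmetic difference is that you apply the triangle inequality to the modulus with the kernel $\epsilon/\sqrt{\left(x-E\right)^2+\epsilon^2}\leq 1$, while the paper splits $\calM$ into real and imaginary parts and bounds each kernel separately, arriving at $2\left|\mu\right|$ instead of your slightly cleaner $\mu\left(\mathbb{R}\right)$.
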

\begin{proof}
    Note that for $z=E+i\epsilon$, we have
    \begin{center}
        $\text{Im}\,\calM\left(E+i\epsilon\right)=\int\limits_\mathbb{R}\frac{\epsilon}{\epsilon^2+\left(x-E\right)^2}\,d\mu\left(x\right)$,\\$\text{}$\\
        $\text{Re}\,\calM\left(E+i\epsilon\right)=\int\limits_\mathbb{R}\frac{x-E}{\left(x-E\right)^2+\epsilon^2}\,d\mu\left(x\right)$.
    \end{center}
    In both cases, multiplying the integrand by $\epsilon$ yields an expression which is bounded from above by 1 and so the integral will be bounded from above by $2\cdot\left|\mu\right|$, where $\left|\mu\right|$ is the total variation of $\mu$.
\end{proof}

\begin{lemma}\emph{\cite[Remark 3.2]{levi2023subordinacy}}\label{subordinate_solution_initial_values}
    For $\mu_s$-almost every $E\in\mathbb{R}$, for every solution $g$ of (\ref{ev_eq_sg}), if there exists $v\in\calV$ such that for every $u\in\calK$ we have $g\left(u\right)=\frac{d\mu_{uv}}{d\mu}\left(E\right)$, then $g$ is subordinate.
\end{lemma}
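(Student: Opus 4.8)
The plan is to realize the prescribed boundary data as the boundary value of a Green's function and then use Poltoratskii's theorem (Lemma \ref{pol_thm}) to identify the associated solution. Fix $v\in\calV$ and, for $z=E+i\epsilon\in\mathbb{C}_+$, consider the Green's function $G_z(u):=\calM_{\mu_{uv}}(z)=\langle R(z)\delta_v,\delta_u\rangle$. Since $J$ is real, $G_z$ is the complex conjugate of $R(z)\delta_v\in\ell^2(\calG)$, so as a function of $u$ it is an $\ell^2$ solution of $(J-\bar z)G_z=\delta_v$ in the coordinatewise sense. Dividing by $\calM_\mu(z)$ and applying Lemma \ref{pol_thm} to the Jordan components of the signed measure $\mu_{uv}\ll\mu$ yields, for $\mu_s$-almost every $E$ and every $u\in\calV$ (a countable intersection),
\begin{equation*}
\tilde g(u):=\lim_{\epsilon\to0^+}\frac{G_{E+i\epsilon}(u)}{\calM_\mu(E+i\epsilon)}=\frac{d\mu_{uv}}{d\mu}(E),
\end{equation*}
so $\tilde g$ carries exactly the data prescribed for $g$ on $\calK$.

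First I would check that $\tilde g$ is itself a solution of (\ref{ev_eq_sg}) at energy $E$. Dividing the coordinatewise identity $(J-\bar z)G_z=\delta_v$ by $\calM_\mu(z)$ and letting $\epsilon\to0$, the finitely many terms of the sum converge by the pointwise limit above while $\bar z\to E$; for $u\ne v$ the right-hand side is $0$, and for $u=v$ it equals $1/\calM_\mu(E+i\epsilon)\to0$, because the singular part of $\mu$ is carried by the set where $\lvert\calM_\mu(E+i\epsilon)\rvert\to\infty$. Hence $\tilde g$ solves (\ref{ev_eq_sg}) on all of $\calV$, including at $v$.

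Next I would identify $g$ with $\tilde g$. Both solve (\ref{ev_eq_sg}) and agree on $\calK$. At each attaching vertex $\varphi_i(0)\in\calK$, the definition of a star-like graph forces every neighbor except $\varphi_i(1)$ to lie in $\calK$, so the equation there reads $a_{\varphi_i(0),\varphi_i(1)}g(\varphi_i(1))+(\text{terms in }g|_{\calK})=E\,g(\varphi_i(0))$; since $a>0$, this determines $g(\varphi_i(1))$, giving $g(\varphi_i(1))=\tilde g(\varphi_i(1))$. With $g$ known at $\varphi_i(0)$ and $\varphi_i(1)$, the three-term recursion along $L_i$ (again using $a>0$) propagates the equality, so $g=\tilde g$ on every branch and therefore on all of $\calV$.

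It remains to show that $\tilde g$ is subordinate, which I expect to be the main obstacle. On each half-line $L_i$, the tail of $G_z$ is an $\ell^2$ solution of the half-line recursion at the non-real energy $\bar z$, hence proportional to the Weyl solution $w^{(i)}_{\bar z}$ of $J_i^0$. By standard subordinacy theory (Gilbert--Pearson, together with the Jitomirskaya--Last estimates of \cite[Theorem 1.2]{jitomirskaya1999power} underlying Lemma \ref{subordinacy_uniqueness}), the normalized Weyl solution converges as $\epsilon\to0$ to the subordinate solution of $J_i^0$ for almost every $E$ with respect to the singular part of the spectral measure of $L_i$. Thus $\tilde g|_{L_i}$, being the limit of $G_{E+i\epsilon}|_{L_i}/\calM_\mu(E+i\epsilon)$, agrees on the tail with a scalar multiple of that subordinate solution; since $\tilde g|_{L_i}$ is itself a genuine real-energy solution (by the second step), backward recursion forces it to equal this multiple everywhere, so it is subordinate or identically $0$. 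As $g=\tilde g$ is nonzero whenever the prescribed data is (the only case in which the conclusion is non-vacuous), $g$ is subordinate. The delicate point is precisely this last step: proving that the boundary value of the $\ell^2$ (Weyl) solution is the subordinate solution, and that this holds for $\mu_s$-almost every $E$ rather than merely almost everywhere with respect to each individual half-line measure, which requires verifying that $\mu_s$ is suitably dominated by the half-lines' singular measures.
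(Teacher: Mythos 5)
Your first three steps are correct, and they essentially reproduce the computation the paper itself carries out in the proof of Lemma \ref{gen_ef_sub_sol}: Poltoratskii's theorem (Lemma \ref{pol_thm}) identifies $\tilde g(u)=\lim_{\epsilon\to0}\calM_{uv}(E+i\epsilon)/\calM_\mu(E+i\epsilon)$ with the Radon--Nikodym data, the resolvent identity shows $\tilde g$ solves (\ref{ev_eq_sg}) (using that $|\calM_\mu(E+i\epsilon)|\to\infty$ $\mu_s$-a.e.), and a solution of (\ref{ev_eq_sg}) is determined by its values on $\calK$, so $g=\tilde g$. Note that the paper itself gives no proof of Lemma \ref{subordinate_solution_initial_values}; it imports it from \cite[Remark 3.2]{levi2023subordinacy}, so your argument has to stand on its own. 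It does not, and the failure is exactly at the point you flag: you obtain subordinacy of $\tilde g|_{L_i}$ from a Gilbert--Pearson-type statement that holds for a.e.\ $E$ \emph{with respect to the singular part of the spectral measure of the half-line operator $J_i^0$}, and then you need $\mu_s$ to be dominated by those half-line singular measures. That domination is false in general. The operator $J$ differs from the decoupled direct sum of the $J_i^0$ by a finite-rank perturbation, and singular parts of spectral measures under finite-rank perturbations are generically \emph{mutually singular} (Aronszajn--Donoghue/Simon--Wolff phenomenology); already for $\calG=\mathbb{Z}$ with $\calK=\{0\}$, eigenvalues or singular continuous spectrum of $J$ typically avoid the singular sets of the two half-line restrictions. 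So the set of energies where your half-line a.e.\ statement is guaranteed may well be $\mu_s$-null, and the key step collapses.

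The missing idea is that the half-line input you need is \emph{pointwise in $E$}, not measure-theoretic, and this is precisely what the Jitomirskaya--Last estimates (the input to Lemma \ref{subordinacy_uniqueness}, \cite[Theorem 1.2]{jitomirskaya1999power}) provide: for \emph{every} $E$ and every phase $\theta$, the solution $u_\theta^E$ of $J_i^0$ is subordinate if and only if $|m_{i,\theta}(E+i\epsilon)|\to\infty$, equivalently if and only if the projective direction of the Weyl solution of $J_i^0$ at $E+i\epsilon$ converges, as $\epsilon\to0$, to the (real) direction of $u_\theta^E$. Your own steps already supply exactly this convergence for $\mu_s$-a.e.\ $E$: on a tail of $L_i$ the function $G_{E+i\epsilon}$ is a nonzero multiple of the Weyl solution of $J_i^0$ at $E+i\epsilon$, and $G_{E+i\epsilon}/\calM_\mu(E+i\epsilon)\to\tilde g$ pointwise; hence if $\tilde g|_{L_i}\not\equiv 0$, then the Weyl direction converges to the direction of $\tilde g|_{L_i}$ (normalize by two consecutive entries where $\tilde g$ does not vanish), so $\tilde g|_{L_i}$ is subordinate by the pointwise equivalence, while if $\tilde g$ vanishes on a tail of $L_i$ it vanishes on all of $L_i$ by the three-term recursion. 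Replacing your a.e.\ Gilbert--Pearson citation by this pointwise statement closes the gap without ever referring to the half-line spectral measures, and is in essence how \cite[Remark 3.2]{levi2023subordinacy} is actually proved.
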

We can now prove a statement about generalized eigenfunctions, which is crucial for the proof of Theorem \ref{main_thm}. We have
\begin{lemma}\label{gen_ef_sub_sol}
    For $\mu_s$-almost every $E\in\mathbb{R}$, every generalized eigenfunction corresponding with $E$ is a subordinate solution of (\ref{ev_eq_sg}).
\end{lemma}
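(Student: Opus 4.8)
The plan is to connect the generalized eigenfunctions $f_i^E$ to the Radon–Nikodym derivatives $\frac{d\mu_{uv}}{d\mu}$ appearing in the definition of $P(E)$, and then invoke Lemma~\ref{subordinate_solution_initial_values}. First I would observe that, by Theorem~\ref{ef_exp_thm} and the definition (\ref{P_def_eq}), for $\mu$-almost every $E$ the matrix entries of $P(E)$ are given both by $\left(P(E)\right)_{uv}=\frac{d\mu_{uv}}{d\mu}(E)$ and by $\sum_{i=1}^{N(E)}f_i^E(u)\,f_i^E(v)$, using that the $f_i^E(w)$ are real (as established in the proof of Lemma~\ref{gen_ef_vanishing_lemma}). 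In particular, applying $P(E)$ to the vector $\delta_v$ for a fixed $v\in\calV$ produces the function $w\mapsto\frac{d\mu_{wv}}{d\mu}(E)$, and this same function equals $\sum_{i}\langle\delta_v,f_i^E\rangle f_i^E=\sum_i f_i^E(v)\,f_i^E$.

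Next I would argue that each $f_i^E$ solves the eigenvalue equation (\ref{ev_eq_sg}) for $\mu$-almost every $E$. This is the content of the phrase in the introduction that ``every $u\in\Ran P(E)$ is a subordinate solution,'' and it should follow from the direct integral formulation in Theorem~\ref{spect_thm_direct_int}: the generalized eigenfunctions satisfy $Jf_i^E=Ef_i^E$ in the weak/pointwise sense because $P(A)$ intertwines $J$ with multiplication by $E$. Concretely, testing the identity $P(E)(J\psi)=E\,P(E)\psi$ against $\delta_v$ and using Lemma~\ref{gen_ef_vanishing_lemma} should yield that each $f_i^E$ satisfies (\ref{ev_eq_sg}) pointwise away from a $\mu$-null set.

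With both facts in hand, I would fix a basis index $i$ and a vertex $v\in\calV$ and consider the solution $g=f_i^E$. To apply Lemma~\ref{subordinate_solution_initial_values}, I need a single vertex $v_0\in\calV$ such that $f_i^E(u)=\frac{d\mu_{u v_0}}{d\mu}(E)$ for every $u\in\calK$. The cleanest route is to use the representation $\frac{d\mu_{uv}}{d\mu}(E)=\sum_{j}f_j^E(u)f_j^E(v)$: since for $\mu_s$-almost every $E$ the matrix $\left(f_j^E(u)\right)_{j,u\in\calK}$ is (generically) of full rank $N(E)$, one can choose coefficients realizing any $f_i^E$ restricted to $\calK$ as such a derivative, possibly after replacing $v$ by a suitable linear combination and rescaling; alternatively one reduces to linear combinations of the functions $w\mapsto\frac{d\mu_{wv}}{d\mu}(E)$, which by linearity of (\ref{ev_eq_sg}) and of Lemma~\ref{subordinate_solution_initial_values} are all subordinate, and $\Ran P(E)$ is spanned by these. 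Since $\calS(E)$ is a vector space (by the Lemma preceding the Borel transform subsection), any linear combination of subordinate solutions that is itself a genuine solution lies in $\calS(E)$, so every element of $\Ran P(E)$, in particular each $f_i^E$, is subordinate.

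The main obstacle I anticipate is the matching step: Lemma~\ref{subordinate_solution_initial_values} is stated for the specific derivatives $\frac{d\mu_{uv}}{d\mu}$ attached to a \emph{single} vertex $v$, whereas a general $f_i^E$ is a priori only a linear combination of such. Handling this requires care to show that $\Ran P(E)$ is exactly the span of the functions $\{w\mapsto\frac{d\mu_{wv}}{d\mu}(E):v\in\calV\}$ restricted to solutions, and that subordinacy is preserved under the relevant linear operations on the $\mu_s$-almost every $E$ on which everything is simultaneously well-defined. This bookkeeping of almost-everywhere statements—intersecting the null sets coming from (\ref{P_def_eq}), Lemma~\ref{subordinate_solution_initial_values}, Lemma~\ref{gen_ef_vanishing_lemma}, and the solution property—is routine but must be done carefully to conclude the result for $\mu_s$-a.e.\ $E$.
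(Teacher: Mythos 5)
Your proposal is correct, and its overall skeleton matches the paper's proof: reduce to the spanning family $P(E)\delta_v$, whose values are by definition $u\mapsto\frac{d\mu_{uv}}{d\mu}(E)$; prove each such function solves (\ref{ev_eq_sg}); conclude subordinacy from Lemma \ref{subordinate_solution_initial_values} applied with that same vertex $v$ (your ``route (b)''; the full-rank claim in your route (a) is unjustified, but also unnecessary); and pass to general elements of $\Ran P(E)$ using that $\calS(E)$ is a vector space. Where you genuinely diverge from the paper is the middle step. The paper proves the solution property analytically: by Lemma \ref{pol_thm}, $P(E)\delta_v(u)=\lim_{\epsilon\to0}\calM_{uv}(E+i\epsilon)/\calM(E+i\epsilon)$, and applying $J$ to the resolvent $R_\epsilon=(J-E+i\epsilon)^{-1}\delta_v$ leaves an error term $\epsilon R_\epsilon(u)/\calM(E+i\epsilon)$, which vanishes because the numerator stays bounded (Lemma \ref{m_func_bdd_by_epsilon}) while $\left|\calM(E+i\epsilon)\right|\to\infty$ for $\mu_s$-almost every $E$. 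You instead extract it from the functional calculus: since $\mu_{J\delta_u,\delta_v}(A)=\int_A x\,d\mu_{uv}(x)$, one has $\frac{d\mu_{J\delta_u,\delta_v}}{d\mu}(E)=E\,\frac{d\mu_{uv}}{d\mu}(E)$ off a $\mu$-null set, and intersecting over the countably many pairs $(u,v)$ and expanding $J\delta_u$ gives exactly (\ref{ev_eq_sg}) for $g=P(E)\delta_v$ at every vertex $u$. This is a legitimate alternative: it is softer, avoids Poltoratskii's theorem and Lemma \ref{m_func_bdd_by_epsilon} entirely, and yields the solution property $\mu$-almost everywhere rather than only $\mu_s$-almost everywhere, so that singularity of the measure enters only through Lemma \ref{subordinate_solution_initial_values}, which is where it is genuinely needed; the paper's route, by contrast, is self-contained given the boundary-value lemmas it has already set up, and works directly with the same Borel-transform objects used elsewhere in the paper. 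Two points in your write-up need tightening: the identity $P(E)(J\psi)=E\,P(E)\psi$ should be derived as above rather than asserted to ``follow from'' Theorem \ref{spect_thm_direct_int}, and Lemma \ref{gen_ef_vanishing_lemma} is not the right tool in that step---to pass from $P(E)J\psi=E\,P(E)\psi$ to statements about the individual $f_i^E$ you only need the linear independence of $f_1^E,\ldots,f_{N(E)}^E$, and if you work with $P(E)\delta_v$ throughout (as the paper does) you never need the $f_i^E$ at all.
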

\begin{proof}
    It suffices to show the lemma for $\mu_s$-almost every $E\in\mathbb{R}$, for every $v\in\calV$, $P\left(E\right)\delta_v$, as defined in Theorem \ref{ef_exp_thm}, is a subordinate solution of (\ref{ev_eq_sg}), as they span the generalized eigenfunctions. Fix $v\in \calV$ and denote $g=P\left(E\right)\delta_v$. Note that by definition we have  \mbox{$g\left(u\right)=\frac{d\mu_{uv}}{d\mu}\left(E\right)$}. We will first show that this function is a solution of (\ref{ev_eq_sg}). Let us denote the Borel transforms of $\mu_{uv}$ and of $\mu$ by $\calM_{uv}$ and by $\calM$, respectively. By Lemma \ref{pol_thm}, we may write
    \begin{center}
        $g\left(u\right)=\underset{\epsilon\to 0}{\lim},\frac{\calM_{uv}\left(E+i\epsilon\right)}{\calM\left(E+i\epsilon\right)}$.
    \end{center}
    Recall that for every $z\in\mathbb{C}_+$,
    \begin{align*}
        \calM_{uv}\left(z\right)=\int\limits_\mathbb{R}\frac{d\mu_{uv}\left(x\right)}{x-z}=\langle\left(J-z\right)^{-1}\delta_u,\delta_v\rangle=\langle\delta_u,\left(J-\overline{z}\right)^{-1}\delta_v\rangle
    \end{align*}
    Let us denote $R_\epsilon=\left(J-E+i\epsilon\right)^{-1}\delta_v$. Then we obtain
    \begin{align}\label{eq_RND_sol}
         \sum\limits_{w\in N\left(u\right)}a_{w,u}g\left(w\right)+b_ug\left(u\right)&=\underset{\epsilon\to0}{\lim}\frac{1}{\calM\left(E+i\epsilon\right)}\left[\sum\limits_{w\in N\left(u\right)}a_{w,u}R_\epsilon\left(w\right)+b_uR_\epsilon\left(u\right)\right]\\
         &=\underset{\epsilon\to0}{\lim}\,\frac{1}{\calM\left(E+i\epsilon\right)}\left(JR_\epsilon\left(u\right)\right).\nonumber
    \end{align}
    By the definition of $R_\epsilon$, denoting $z=E+i\epsilon$, we have
    \begin{center}
        $JR_\epsilon=\left(J-\overline{z}+\overline{z}\right)R_\epsilon=\delta_v+\overline{z}R_\epsilon$.
    \end{center}
    Plugging this in (\ref{eq_RND_sol}), we obtain
    \begin{center}
        $\sum\limits_{w\in N\left(u\right)}a_{w,u}g\left(w\right)+b_ug\left(u\right)=\underset{\epsilon\to0}{\lim}\,\frac{1}{\calM\left(E+i\epsilon\right)}\left(\delta_v\left(u\right)+ER_\epsilon\left(u\right)-\epsilon R_\epsilon\left(u\right)\right)=Eg\left(u\right)-\underset{\epsilon\to 0}{\lim}\,\frac{\epsilon R_\epsilon\left(u\right)}{\calM\left(E+i\epsilon\right)}$.
    \end{center}
    Finally, we have 
    \begin{align*}
        \epsilon R_\epsilon\left(u\right)=\epsilon\langle\delta_u,\left(J-\overline{z}\right)^{-1}\delta_v\rangle=\langle\left(J-z\right)^{-1}\delta_u,\delta_v\rangle=\calM_{uv}\left(z\right).
    \end{align*}
    Thus, for some constant $C>0$, we have that
    \begin{align*}
        \epsilon R_\epsilon\left(u\right) \leq C
    \end{align*}
    for sufficiently small $\epsilon $, by Lemma \ref{m_func_bdd_by_epsilon}. That, combined with the fact that $\calM(E+i\epsilon)\rightarrow\infty $ as $\epsilon\rightarrow 0$ allow us to write
    \begin{align*}
        \lim\limits_{\epsilon\to 0}\frac{\epsilon R_\epsilon\left(u\right)}{\calM\left(E+i\epsilon\right)}=0
    \end{align*}
    which proves that $g$ is indeed a solution of (\ref{ev_eq_sg}). The fact that $g$ is subordinate now follows from Lemma \ref{subordinate_solution_initial_values}.
\end{proof}

\section{Proof of Theorem \ref{main_thm}}\label{proof_section}

 The proof of Theorem \ref{main_thm} will be by contradiction. Throughout this section, We will have the setting of a general star-like graph $\calG$ with $m$ branches and a Jacobi operator  $J$ acting on it, and we will assume that  
 \begin{assumption}\label{ContradictionAssumption}
     There exists a set $A\in\text{Borel}\left(\mathbb{R}\right)$ which satisfies
 \begin{enumerate}
     \item $\mu_{\text{sc}}\left(A\right)>0$.
     \item \label{DimAssumption}$N\left(E\right)=m$ for $\mu$-almost every $E\in A$.
     \item $\text{Leb}\left(A\right)=0$.
     \item \label{NoPP} $A\cap \sigma_{pp}(J)=\emptyset$
 \end{enumerate}
 \end{assumption}
 and arrive at a contradiction.

 Recall that the space of subordinate solutions for $J$ and $\calG$, which correspond with $E\in\mathbb{R}$ is a vector space and is denoted by $\calS(E)$. Then we have the following
 \begin{lemma}\label{isomorphism_lemma}
     For $\mu_{sc}$-almost every $E\in A$, the mapping $\lambda(E):\calS(E)\rightarrow \bbR^m$ given by 
     \begin{align*}
         \lambda(E)[u]=(\lambda_i[u])_{i=1}^m
     \end{align*}
     where $\lambda_i[u]$ are the unique scalars given by Corollary \ref{UniqueScalar}, is an isomorphism.
 \end{lemma}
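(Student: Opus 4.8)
The plan is to verify that $\lambda(E)$ is a linear injection and then to pin down $\dim\calS(E)=m$ by a dimension count, at which point injectivity forces surjectivity. Linearity is immediate: by Corollary~\ref{UniqueScalar} the scalars $\lambda_i[u]$ are obtained from $u$ by reading off the unique proportionality constant between the restriction $u|_{L_i}$ and the fixed branch subordinate solution $g_i$, and this assignment is manifestly linear in $u$. So the entire content lies in establishing that the injective image sits inside $\bbR^m$ and that $\calS(E)$ has the matching dimension.

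For injectivity I would argue as follows. Suppose $u\in\calS(E)$ lies in the kernel, so $\lambda_i[u]=0$ for every $1\le i\le m$. By Corollary~\ref{UniqueScalar} this gives $u(\varphi_i(n))=\lambda_i[u]\,g_i(n)=0$ for all $n\in\bbN$ and all $i$, i.e. $u$ vanishes on $\calV\setminus\calK=\bigcup_i\{\varphi_i(n):n\ge 1\}$. Hence $u$ is supported on the finite set $\calK$, so $u\in\ell^2(\calG)$, and since $u$ solves (\ref{ev_eq_sg}) it satisfies $Ju=Eu$ pointwise, making it a genuine $\ell^2$-eigenvector as soon as $u\ne 0$. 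This is precisely where Assumption~\ref{ContradictionAssumption}, part~(\ref{NoPP}), enters: because $E\in A$ and $A\cap\sigma_{pp}(J)=\emptyset$, the energy $E$ is not an eigenvalue, forcing $u\equiv 0$. Thus $\lambda(E)$ is injective and in particular $\dim\calS(E)\le m$.

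It remains to produce the matching lower bound $\dim\calS(E)\ge m$, and here I would invoke the inputs that hold almost everywhere. By (\ref{dimension_sub_sol_multiplicity}) we have $N(E)\le\dim\calS(E)$ for $\mu_s$-almost every $E$, and since $\mu_{sc}\ll\mu_s$ this persists for $\mu_{sc}$-almost every $E$. Combining with Assumption~\ref{ContradictionAssumption}, part~(\ref{DimAssumption}), which gives $N(E)=m$ for $\mu$-almost every (hence $\mu_{sc}$-almost every) $E\in A$, yields $\dim\calS(E)\ge m$ for $\mu_{sc}$-almost every $E\in A$. Together with the upper bound from injectivity this forces $\dim\calS(E)=m$, and an injective linear map between real vector spaces of the same finite dimension is an isomorphism.

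The routine but essential care is in the almost-everywhere bookkeeping: the existence and uniqueness of the branch solutions $g_i$ underlying Corollary~\ref{UniqueScalar}, the bound (\ref{dimension_sub_sol_multiplicity}), and the multiplicity value $N(E)=m$ each hold only off a $\mu$-null set, so the conclusion is asserted only for $\mu_{sc}$-almost every $E\in A$. The genuine mathematical content, and the single place the contradiction hypothesis is actually used, is the injectivity step: the no-point-spectrum condition is exactly what rules out a nonzero compactly supported solution. Everything else is a clean dimension count, so I expect the injectivity argument to be the main (and only real) obstacle.
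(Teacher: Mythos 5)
Your proof is correct and follows essentially the same route as the paper: the lower bound $\dim\calS(E)\geq m$ comes from combining (\ref{dimension_sub_sol_multiplicity}) with Property~\ref{DimAssumption} of Assumption~\ref{ContradictionAssumption}, and injectivity comes from observing that a kernel element is supported on the finite set $\calK$, hence an $\ell^2$ eigenfunction unless it vanishes, which Property~\ref{NoPP} rules out. The only difference is presentational (you do injectivity before the dimension count), so there is nothing substantive to add.
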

 \begin{proof}
     By the Assumption \ref{ContradictionAssumption}, Property \ref{DimAssumption}, we have that $N(E)=m$. Combining this with (\ref{dimension_sub_sol_multiplicity}), we get $\dim \calS(E)\geq m$, so it is enough to show that the $\ker (\lambda(E))=\{0\}$.  Let $f \in \ker (\lambda(E))$. This means that $\supp f\subset \calK$ (as it has no support on all $L_i$). On the other hand, $f\in \calS(E)$ so $f$ is a subordinate solution for energy $E$, and in particular, a solution of (\ref{ev_eq_sg}). Thus, $f$ is either an eigenfunction of $J$ or identically equal to $0$. By Property \ref{NoPP} in our Assumption \ref{ContradictionAssumption}, we conclude that $f\equiv 0$, as claimed.
 \end{proof}
It is not hard to see that given $1\leq i,j\leq m$, the scalar $\lambda_{ij}\left(E\right)=\lambda_i\left[f_j^E\right]$ can be found in a measurable way. Indeed, this is simply a matter of taking inner products of $f_j^E$ with $\delta$ functions on $\calK$, which is measurable by the measurability of the Fourier transform. Therefore, the function $E\to\Lambda\left(E\right)$, where $\Lambda\left(E\right)$ is given by $\Lambda\left(E\right)_{ij}=\lambda_{ij}\left(E\right)$ is measurable. Combining Lemma \ref{isomorphism_lemma} along with the fact that $f_1^E,\ldots,f_{N\left(E\right)}^E$ is a basis for $\Ran P\left(E\right)$, we obtain that $\Lambda$ is invertible. Clearly, the inverse $\Lambda\left(E\right)^{-1}$ is also measurable.
\begin{lemma}\label{vanishing_lemma}
    Fix $1\leq i\leq m$ and $E\in A$ and denote $x=\alpha\Lambda\left(E\right)^{-1}e_i$, where $\alpha\in\mathbb{C}$ is some non-zero scalar, and $e_i$ is the $i$th element of the standard basis.
    Let $g\in\Ran P\left(E\right)$ be given by $g=\sum\limits_{j=1}^mx_jf_j^E$.
    Then $g$ is a subordinate solution that vanishes on every half-line except the $i$-th one.
\end{lemma}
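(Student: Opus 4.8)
The plan is to recognize that this statement is, at its core, a linear-algebra identity built on the isomorphism $\lambda(E)$ and the invertibility of $\Lambda(E)$, capped off by a non-triviality check. First I would record that each $f_j^E$ is a generalized eigenfunction, so by Lemma \ref{gen_ef_sub_sol} we have $f_j^E\in\calS(E)$ for $\mu_s$-almost every $E$, and in particular for $\mu_{sc}$-almost every $E\in A$. Since $\calS(E)$ is a vector space, the linear combination $g=\sum_{j=1}^m x_j f_j^E$ again lies in $\calS(E)$; thus $g$ solves (\ref{ev_eq_sg}) and its restriction to every half-line is either subordinate or identically $0$. What remains is to pin down those restrictions and to verify $g\not\equiv 0$.

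The key computation is to apply the linear map $\lambda(E)$ to $g$. By linearity, $\lambda(E)[g]=\sum_{j=1}^m x_j\,\lambda(E)[f_j^E]$. From the definition $\Lambda(E)_{ij}=\lambda_i[f_j^E]$, the vector $\lambda(E)[f_j^E]$ is exactly the $j$-th column of $\Lambda(E)$, whence $\lambda(E)[g]=\Lambda(E)x$. Substituting $x=\alpha\Lambda(E)^{-1}e_i$ and using the invertibility of $\Lambda(E)$ noted after Lemma \ref{isomorphism_lemma}, I get $\lambda(E)[g]=\alpha\,\Lambda(E)\Lambda(E)^{-1}e_i=\alpha e_i$. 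In coordinates this says $\lambda_k[g]=\alpha\delta_{ki}$ for $1\le k\le m$.

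Translating back through Corollary \ref{UniqueScalar}, which gives $g(\varphi_k(n))=\lambda_k[g]\,g_k(n)$, the vanishing of $\lambda_k[g]$ for $k\neq i$ shows that $g$ is identically zero on every half-line other than the $i$-th, while on the $i$-th half-line $g(\varphi_i(n))=\alpha g_i(n)$. Finally, since $\alpha\neq 0$ and the subordinate solution $g_i$ of $J_i^0$ is non-trivial by definition, $g$ does not vanish on the $i$-th half-line; in particular $g\not\equiv 0$, so $g$ is a genuine subordinate solution, which completes the argument.

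The only real subtlety I anticipate is the index bookkeeping: one must check that $\Lambda(E)_{ij}=\lambda_i[f_j^E]$ makes $\lambda(E)[f_j^E]$ the $j$-th \emph{column} rather than row of $\Lambda(E)$, so that $\lambda(E)[g]=\Lambda(E)x$ and not its transpose; the rest is direct substitution. A secondary point worth stating explicitly is that all of this holds only for $\mu_{sc}$-almost every $E\in A$, namely on the common full-measure set where $N(E)=m$, the $f_j^E$ are subordinate, and $\Lambda(E)$ is invertible.
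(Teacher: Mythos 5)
Your proof is correct and takes essentially the same approach as the paper's: the paper's (very terse) proof likewise observes that by the construction $x=\alpha\Lambda\left(E\right)^{-1}e_i$ one has $\lambda_k\left(E\right)\left[g\right]=0$ for $k\neq i$ and $\lambda_i\left(E\right)\left[g\right]=\alpha$, and concludes via Corollary \ref{UniqueScalar}. Your write-up merely makes explicit the details the paper leaves implicit — the column bookkeeping $\lambda\left(E\right)\left[g\right]=\Lambda\left(E\right)x$, the membership $g\in\calS\left(E\right)$ via Lemma \ref{gen_ef_sub_sol}, the non-triviality check, and the almost-everywhere caveat.
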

\begin{proof}
    Note that by definition we have that for every $1\leq k\leq m$, $\lambda_k\left(E\right)\left[g\right]=0$ for every $k\neq i$ and $\lambda_i\left(E\right)\left[g\right]=\alpha$. This immediately implies the result.
\end{proof}

 Finally, we get to the main conclusion, which will then allow us to get a contradiction:
 \begin{lemma}\label{wi_lemma}
     Under Assumption \ref{ContradictionAssumption}, for every $1\leq i\leq m$ there exists a subspace $\calW_i \subset \ell^2(\calG)$ with the following properties:
     \begin{enumerate}
         \item $\dim \calW_i =\infty $
         \item $\calW_i$ is $J$-invariant, namely  $J\calW_i\subset \calW_i$.
         \item For every $\psi\in \calW_i$, $\supp \psi \subset \calK\cup L_i\setminus \{\varphi_j(0)\}_{j\neq i}$.
     \end{enumerate}
 \end{lemma}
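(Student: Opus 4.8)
The plan is to build $\calW_i$ by \emph{smearing} the fiberwise subordinate solutions produced in Lemma~\ref{vanishing_lemma} against scalar weights, using the direct-integral realization of Theorem~\ref{spect_thm_direct_int}. Fix $1\le i\le m$. For $\mu_{sc}$-a.e.\ $E\in A$ set $x^{(i)}(E)=\Lambda(E)^{-1}e_i\in\mathbb{R}^m$ and let $g_E^{(i)}=\sum_{j=1}^m x_j^{(i)}(E)f_j^E$ be the subordinate solution of Lemma~\ref{vanishing_lemma} (with $\alpha=1$), which satisfies $g_E^{(i)}(\varphi_j(n))=0$ for all $n\ge 1$ and all $j\ne i$. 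Since $E\mapsto\Lambda(E)^{-1}$ is measurable and finite $\mu$-a.e.\ on $A$, the sets $\{E\in A:\|\Lambda(E)^{-1}\|\le M\}$ increase to $A$ up to a $\mu$-null set, so I may fix $M$ and a measurable $A'\subseteq A$ with $\mu_{sc}(A')>0$ on which $\|x^{(i)}(E)\|\le M$.

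With $A'$ in hand, for each $c\in L^2(A',\mu)$ I would consider the section $s_c(E)=c(E)x^{(i)}(E)$ for $E\in A'$ and $s_c(E)=0$ otherwise. The bound on $\|x^{(i)}(E)\|$ gives $\int\|s_c(E)\|^2\,d\mu\le M^2\|c\|_{L^2(A',\mu)}^2<\infty$, so $s_c\in\int^\oplus_{\mathbb{R}}\mathbb{C}^{N(E)}\,d\mu(E)$, and I set $\psi_c:=U^{-1}s_c\in\ell^2(\calG)$, where $U$ is the unitary of Theorem~\ref{spect_thm_direct_int}. I then define $\calW_i:=\{\psi_c:c\in L^2(A',\mu)\}$, a linear subspace of $\ell^2(\calG)$.

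It remains to check the three properties. Since $(\hat{\psi_c}(E))_j=c(E)x_j^{(i)}(E)$, for every vertex $w$ one has $\sum_{j}(\hat{\psi_c}(E))_j f_j^E(w)=c(E)g_E^{(i)}(w)$. For $w=\varphi_j(n)$ with $j\ne i$ and $n\ge 1$ this vanishes $\mu$-a.e.\ by Lemma~\ref{vanishing_lemma}; for $w=\varphi_j(0)$ with $j\ne i$ I would evaluate (\ref{ev_eq_sg}) for $g_E^{(i)}$ at $\varphi_j(1)$, whose only neighbors are $\varphi_j(0)$ and $\varphi_j(2)$, and use $g_E^{(i)}(\varphi_j(1))=g_E^{(i)}(\varphi_j(2))=0$ together with $a>0$ to conclude $g_E^{(i)}(\varphi_j(0))=0$. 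In all these cases Lemma~\ref{gen_ef_vanishing_lemma} gives $\psi_c(w)=0$, whence $\supp\psi_c\subseteq(\calK\cup L_i)\setminus\{\varphi_j(0)\}_{j\ne i}$. For $J$-invariance, $J$ is multiplication by $E$ in the direct integral and $|E|\le\|J\|$ $\mu$-a.e., so $U(J\psi_c)=E\,s_c=s_{Ec}$ with $Ec\in L^2(A',\mu)$, i.e.\ $J\psi_c=\psi_{Ec}\in\calW_i$. Finally, $c\mapsto\psi_c$ is injective since $x^{(i)}(E)=\Lambda(E)^{-1}e_i\ne 0$; as $\mu_{sc}$ is non-atomic and $\mu_{sc}(A')>0$, the space $L^2(A',\mu)$ is infinite-dimensional, and hence so is $\calW_i$.

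The step I expect to be the main obstacle is the passage from the non-$\ell^2$ generalized eigenfunctions to honest $\ell^2$ vectors: one must control the fiber coefficients $x^{(i)}(E)$ to ensure $s_c$ lies in the direct integral, which is exactly why restricting to the set $A'$ where $\|\Lambda^{-1}\|$ is bounded is essential, and one must transfer the fiberwise vanishing of $g_E^{(i)}$ to genuine pointwise vanishing of $\psi_c$ through Lemma~\ref{gen_ef_vanishing_lemma}. The one genuinely combinatorial input is the observation that a solution of (\ref{ev_eq_sg}) vanishing on $L_j$ for $n\ge 1$ is automatically forced to vanish at the attaching vertex $\varphi_j(0)$, which relies on the positivity of the edge weights.
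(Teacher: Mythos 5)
Your proof is correct and takes essentially the same route as the paper: both construct $\calW_i$ as $U^{-1}$ of direct-integral sections of the form $h(E)\cdot\Lambda(E)^{-1}e_i$ supported in $A$, both obtain the support property by combining Lemma \ref{vanishing_lemma} with Lemma \ref{gen_ef_vanishing_lemma}, and both get $J$-invariance from the fact that $J$ acts as multiplication by $E$ in the direct integral. Your write-up is in fact slightly more careful than the paper's, since you restrict to a subset $A'$ on which $\|\Lambda(E)^{-1}\|$ is bounded to guarantee the sections lie in the direct-integral space, and you give the explicit argument (evaluating (\ref{ev_eq_sg}) at $\varphi_j(1)$ and using $a>0$) that the fiberwise solutions also vanish at the attaching vertices $\varphi_j(0)$, $j\neq i$ --- two points the paper leaves implicit.
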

 \begin{proof}
    Let $U$ be the unitary transformation given by Theorem \ref{spect_thm_direct_int}. We define, for any $1\leq i\leq m$
    \begin{align*}
        \calW_i =\{U^{-1}\int\limits_{A}^{\oplus}h(E)\cdot\Lambda^{-1}(E)e_i\, d\mu(E)\mid h:\bbR\rightarrow\bbR \text{ is a measurable function}\}
    \end{align*}
    Since we may choose any measurable function, and in addition, the support of $\mu$ in $A$ is infinite (since it is not pure point there), we have that $\dim \calW_i =\infty $. Furthermore, by Lemma \ref{vanishing_lemma} and Lemma \ref{gen_ef_vanishing_lemma}, every  $\psi\in \calW_i$ vanishes on each $L_j$, and thus outside $\calK\cup L_i\setminus \{\varphi_j(0)\}_{j\neq i}$. Finally, we have that 
    \begin{align*}
        J\psi=U^{-1}\int\limits_{A}^{\oplus}E\cdot h(E)\cdot\Lambda^{-1}(E)e_i\, d\mu(E)
    \end{align*}
    and so by definition $J\psi\in\calW_i$, as required.
 \end{proof}
\subsection{Getting the contradiction
 }
Let us now show that the existence of a subspace $\calW_i$ given by Lemma \ref{wi_lemma} leads to a contradiction.\par
In this subsection, we will restrict ourselves to $\calG^1,$ a star-like graph with a single branch, that is $m=1$. We will consider a Jacobi operator on $J_1$ on $\calG^1$, and we will have the associated isomorphism $\varphi_1$.  \par
We start with the following observation.
\begin{claim}\label{LinearComb}
    For every $v,w \in \calV$ and $n\in \bbN$, we denote by $\calP(v,w,n)$ the collection of all paths from $v$ to $w$ of length $n$.
    Let $\psi \in \ell^2(\calG)$. Then we have, for any $n\in \bbN$ and $v\in \calV$,
    \begin{align*}
        J^n\psi(v)=\sum_{w\in B_{n-1}(v)}\alpha_w\psi(w)+\sum_{w\in \partial B_n(v)}\beta_{v,w,n} \psi(w)
    \end{align*}
    where for every $n\in\mathbb{N}$, 
    \begin{align*}
        &\beta_{v,w,n}=\sum_{(x_0,\dots, x_{n})\in \calP_{v,w,n}}\prod_{i=0}^{n-1}a_{x_i,x_{i+1}}
    \end{align*}
    and $\alpha_w $ are some constants.
\end{claim}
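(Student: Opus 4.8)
The natural approach is induction on $n$. For the base case $n=1$, the definition $J\psi(v)=b_v\psi(v)+\sum_{w\in\partial B_1(v)}a_{v,w}\psi(w)$ already has the desired form: the single interior term is $\alpha_v\psi(v)$ with $\alpha_v=b_v$ (here $B_0(v)=\{v\}$), while for each neighbor $w\in\partial B_1(v)$ the coefficient $a_{v,w}$ agrees with $\beta_{v,w,1}$, since the only length-$1$ path from $v$ to $w$ is $(v,w)$ and $\prod_{i=0}^{0}a_{x_i,x_{i+1}}=a_{v,w}$.

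For the inductive step I would write $J^{n+1}\psi(v)=b_v\,(J^n\psi)(v)+\sum_{u\in\partial B_1(v)}a_{v,u}\,(J^n\psi)(u)$ and substitute the inductive hypothesis for each of $(J^n\psi)(v)$ and $(J^n\psi)(u)$. Since one application of $J$ enlarges the support by at most one layer, the support of $J^{n+1}\psi(v)$ is contained in $B_{n+1}(v)$, so it suffices to pin down the coefficient of $\psi(w)$ for $w\in\partial B_{n+1}(v)$ and to absorb every other coefficient into the unspecified constants $\alpha_w$. Fix such a $w$, so $\dist(v,w)=n+1$. The term $b_v\,(J^n\psi)(v)$ contributes nothing to $\psi(w)$, as $w\notin B_n(v)$. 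For a neighbor $u\in\partial B_1(v)$, the triangle inequality gives $\dist(u,w)\geq\dist(v,w)-\dist(v,u)=(n+1)-1=n$, so $w\notin B_{n-1}(u)$; hence $w$ can only appear through the boundary part of the hypothesis at $u$, and only when $\dist(u,w)=n$, i.e.\ $w\in\partial B_n(u)$, with coefficient $a_{v,u}\beta_{u,w,n}$. Collecting contributions, the coefficient of $\psi(w)$ equals $\sum_{u\in\partial B_1(v),\,\dist(u,w)=n}a_{v,u}\beta_{u,w,n}$.

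The remaining task — and the step I expect to be the \emph{main (if not difficult) point} — is to match this sum with $\beta_{v,w,n+1}$ via path decomposition. Because $\dist(v,w)=n+1$, every path $(v=x_0,x_1,\dots,x_{n+1}=w)\in\calP(v,w,n+1)$ is a geodesic, so its first step lands on a neighbor $x_1=u$ with $\dist(u,w)=n$, and $(x_1,\dots,x_{n+1})$ is a length-$n$ path in $\calP(u,w,n)$; conversely, prepending $v$ to any such path yields an element of $\calP(v,w,n+1)$, giving a bijection. Since $\prod_{i=0}^{n}a_{x_i,x_{i+1}}=a_{v,u}\prod_{i=1}^{n}a_{x_i,x_{i+1}}$, summing the weighted products over this bijection gives $\sum_{u}a_{v,u}\beta_{u,w,n}=\beta_{v,w,n+1}$, which is exactly the coefficient computed above. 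This closes the induction.

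The one subtlety to handle with care is the interplay between the global sum $\calP(v,w,n)$ over \emph{all} length-$n$ paths appearing in the definition of $\beta_{v,w,n}$ and the fact that $\beta_{u,w,n}$ is invoked in the hypothesis only for boundary vertices $w\in\partial B_n(u)$. For such vertices all length-$n$ paths from $u$ to $w$ are automatically geodesics, so the two descriptions of $\beta_{u,w,n}$ coincide and the prepending map indeed produces precisely the geodesic (hence all) length-$(n+1)$ paths from $v$ to $w$. No explicit formula for the interior coefficients $\alpha_w$ is needed, which is why they may be left as unspecified constants.
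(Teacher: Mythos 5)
Your proof is correct and follows essentially the same route as the paper's: induction on $n$, expanding $J^{n+1}\psi(v)$ through the neighbors of $v$, and identifying the boundary coefficient via the recursion $\beta_{v,w,n+1}=\sum_{u\sim v}a_{v,u}\beta_{u,w,n}$ coming from the first-step decomposition of paths. You are in fact more explicit than the paper about the triangle-inequality and geodesic-bijection details, but the underlying argument is the same.
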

\begin{proof}
    This comes from simple induction. For $n=1$, we have that 
    \begin{align*}
        J\psi(v)=\sum_{w\in \partial B_1\left(v\right)} a_{v,w} \psi(w)+b_v\psi(v)
    \end{align*}
    Since the space of paths of size $1$ is exactly the neighbors, we get that $\beta_{v,w,1}=a_{v,w}$. Next, we assume the assertion is true for every vertex and every path of length $n-1$, and we can write, by assumption, for every $u\in \calV$
    \begin{align*}
        J^n\psi(u)=\sum_{w\in B_{n-1}(u)}\tilde{\alpha}_w\psi(w)+\sum_{w\in \partial B_n(u)}\beta_{w,u,n} \psi(w)
    \end{align*}
    And so we have that
    \begin{align*}
        &J^n\psi(v)=J[J^{n-1}\psi](v)=\sum_{w\in N(v)} a_{v,w}[J^{n-1}\psi](w)+b_v[J^{n-1}\psi](v)\\
        &=\sum_{w\in N(v)} a_{v,w}[\sum_{u\in B_{n-2}(w)}\tilde{\alpha}_u\psi(u)+\sum_{u\in \partial B_{n-1}(w)}\beta_{u,w,n-1} \psi(u)]\\
        &+b_v(\sum_{w\in B_{n-2}(v)}\tilde{\alpha}_w\psi(w)+\sum_{w\in \partial B_{n-1}(v)}\beta_{w,v,n-1} \psi(w))
    \end{align*}
    We note that the paths of length $n$ come from $u\in \partial B_{n-1}(w),w\sim v$, so in particular we have that 
    \begin{align*}
        \beta_{u,v,n}=\sum_{w\sim v} a_{v,w}\beta_{u,w,n-1}
    \end{align*}
    as needed.
\end{proof}
We also have the following claim:
   \begin{claim}\label{Induction}
        Let $\psi\in\ell^2\left(\calG^1\right)$ and $v_0\in\calK$. Suppose that for every $k\in\mathbb{N}\cup\left\{0\right\}$, $J_1^k\psi(v_0)=0$. Then for every $n\in\bbN$ we have that $ \psi(\varphi_1(n))$ is uniquely determined by $\{\psi(v)\}_{v\in \calK}$.
    \end{claim}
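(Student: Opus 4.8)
The plan is to extract $\psi(\varphi_1(n))$ from the single relation $J_1^{d_0+n}\psi(v_0)=0$, where $d_0=\dist(v_0,\varphi_1(0))$, by using Claim~\ref{LinearComb} together with the observation that $\varphi_1(n)$ is the unique vertex of the half-line sitting on the sphere of radius $d_0+n$ about $v_0$, and that its coefficient $\beta$ is strictly positive.

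First I would record the elementary distance fact. Since $v_0\in\calK$ and the half-line is a pendant path whose only point of contact with the rest of $\calG^1$ is $\varphi_1(0)$ (because $B_1(\varphi_1(k))=\{\varphi_1(k-1),\varphi_1(k+1)\}$ for $k\ge 1$), every path from $v_0$ to $\varphi_1(n)$ must pass through $\varphi_1(0)$, and hence $\dist(v_0,\varphi_1(n))=d_0+n$ for all $n\ge 0$. In particular, among the vertices off the compact part, namely $\{\varphi_1(k):k\ge 1\}$, the only one lying on the sphere $\partial B_{d_0+n}(v_0)$ is $\varphi_1(n)$ itself; every other vertex of $\partial B_{d_0+n}(v_0)$ belongs to $\calK$, and every vertex of $B_{d_0+n-1}(v_0)$ is either in $\calK$ or of the form $\varphi_1(k)$ with $k\le n-1$.

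Next I would apply Claim~\ref{LinearComb} with the power $d_0+n$ to the hypothesis $J_1^{d_0+n}\psi(v_0)=0$, which gives
\[
0 = \sum_{w\in B_{d_0+n-1}(v_0)}\alpha_w\,\psi(w) + \sum_{w\in\partial B_{d_0+n}(v_0)}\beta_{v_0,w,d_0+n}\,\psi(w),
\]
and then isolate the term $w=\varphi_1(n)$ from the sphere sum. Its coefficient $\beta_{v_0,\varphi_1(n),d_0+n}$ is a sum over the (non-empty) set of shortest paths of products of strictly positive weights, hence strictly positive; this is exactly where the standing assumption $a>0$ enters. Dividing, I obtain $\psi(\varphi_1(n))$ as a linear combination, with coefficients depending only on $J_1$ and $\calG^1$, of the values $\psi(w)$ for $w\in B_{d_0+n-1}(v_0)$ and for $w\in\partial B_{d_0+n}(v_0)\setminus\{\varphi_1(n)\}$, all of which are either $\calK$-values or values $\psi(\varphi_1(k))$ with $k\le n-1$ by the distance fact.

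Finally I would close by strong induction on $n$. For the base case $n=1$ the ball $B_{d_0}(v_0)$ meets the half-line only in $\varphi_1(0)\in\calK$, so the formula already expresses $\psi(\varphi_1(1))$ purely in terms of $\{\psi(v)\}_{v\in\calK}$. For the inductive step, the right-hand side involves only $\calK$-values and the values $\psi(\varphi_1(k))$ with $k\le n-1$, which by the induction hypothesis are themselves determined by $\{\psi(v)\}_{v\in\calK}$; hence so is $\psi(\varphi_1(n))$. I do not expect a serious obstacle: the only points requiring care are the bookkeeping that separates the unique outermost half-line vertex $\varphi_1(n)$ from the $\calK$-vertices that may also sit on the sphere for small radii, and the verification that the leading coefficient $\beta_{v_0,\varphi_1(n),d_0+n}$ is nonzero, which is immediate from the positivity of $a$.
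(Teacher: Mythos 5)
Your proof is correct and takes essentially the same approach as the paper's: both apply Claim \ref{LinearComb} to $J_1^{\ell}\psi(v_0)=0$ at the power $\ell=\dist(v_0,\varphi_1(n))$, observe that $\varphi_1(n)$ is the unique half-line vertex on the outer sphere and that its coefficient $\beta$ is nonzero by positivity of $a$, and then conclude by induction on $n$. Your write-up is merely a bit more explicit about the distance bookkeeping (every path from $v_0$ to $\varphi_1(n)$ passes through $\varphi_1(0)$), which the paper leaves implicit.
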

    \begin{proof}
        We will start by showing it this for $n=1$. We note that we have that $\psi(v_0)=0$, as for our assumption, and we have $J_1^\ell\psi(v_{0})=0$ for any $\ell\in \bbN$. \par
        Let $d\in \bbN$ be the smallest natural number such that $\varphi_1(1)\in \partial B_d(v_0)$. Note that $\forall j\geq 2, \varphi_1(j)\not \in \partial B_d(v_{0})$, since the only way to get to $\varphi_1(j)$ is through $\varphi_1(1)$, and $d$ is minimal.\par
        By Claim \ref{LinearComb},  we have that for every $\ell\in \bbN$
        \begin{align*}
            J_1^\ell\psi(v_{0})=\sum_{w\in  B_{n-1}(v_{0})}\alpha_{w}\psi(w)+\sum_{w\in \partial B_\ell(v_{0})}\beta_{w,v_0,\ell}\psi(w)
        \end{align*}
        where $\beta_{w,v_0,\ell}$ is exactly the sum over the weights of the edges, making all the paths from $v_{0}$ to $w$ of length $\ell$ and, in particular, is by definition non $0$. \par
        Thus, we have that 
        \begin{align*}
            0=J_1^d\psi(v_{0})=\sum_{w\in \partial B_{d-1}(v_{0})}\alpha_{w}\psi(w)+\sum_{w\in \partial B_d(v_{0})}\beta_{w,v_0,d}\psi(w)
        \end{align*}
        We note that $\varphi_1(1)\in \partial B_d(v_{0})$ but not in any previous neighboring function, and all the other terms involve only the vertices $\{w\}_{w\in \calK}$. In other words, there is a linear combination of these values that gives $0$, where the coefficient of $\psi(\varphi_1(1))$ is non-zero. So we may conclude that there are some coefficients $\{a_w\}_{w\in \calK}\in \bbR$ such that 
        \begin{align*}
            \psi(\varphi_1(1))=\sum_{w\in \calK}a_w \psi(w)
        \end{align*}
        So, we proved our claim for $n=1$. Next, we assume that the claim holds for all $j< n$, but, again, we will have that 
        \begin{align*}
            0=J_1^{d+n-1}\psi(v_{i})=\sum_{w\in B_{d+n-2}(v_{0})}\alpha_{w}\psi(w)+\sum_{w\in \partial B_{d+n-1}(v_{0})}\beta_{w,v_0,d+n-1}\psi(w)
        \end{align*}
        where $\varphi_1(n) \in \partial B_{d+n-1}(v_{i})$ and not of the previous summands, so its coefficient is non-zero. Thus we get that we have some coefficients $\{a_w\}_{w\in \calK},\{b_j\}_{j=1}^{n-1}\in \bbR$ such that 
        \begin{align*}
                \psi(\varphi_1(n))=\sum_{w\in \calK}a_w \psi(w)+\sum_{i=1}^{n-1}b_j \psi(\varphi_1(j))
        \end{align*}
        By the induction assumptions $\{\psi(\varphi_1(j))\}_{j=1}^{n-1}$ is a linear combination of $\{\psi(w)\}_{w\in \calK}$ thus completing the proof.
    \end{proof}
With this, we may prove the following claim:
\begin{claim}\label{ContraClaim}
    Let $\calG$ be a star-like graph with a single branch, that is $m=1$. Let $J^1$ be a Jacobi operator along with the associated isomorphism $\varphi_1$. Let $\calW\subset \ell^2(\calG)$ a subspace such that $J^1\calW\subset \calW$, and $\dim \calW=\infty$ then we have 
    \begin{align*}
        \forall v\in \calK, \exists \psi \in \calW, \psi(v)\neq 0
    \end{align*}
\end{claim}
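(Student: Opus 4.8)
The plan is to argue by contradiction, with Claim \ref{Induction} doing the heavy lifting. Suppose the conclusion fails. Then there is a vertex $v_0 \in \calK$ such that $\psi(v_0) = 0$ for every $\psi \in \calW$. The key observation is that $J^1$-invariance upgrades this single vanishing condition into the full hypothesis of Claim \ref{Induction}: since $J^1 \calW \subset \calW$, we have $J_1^k \psi \in \calW$ for every $k \in \mathbb{N} \cup \{0\}$, and therefore $J_1^k\psi(v_0) = 0$ for all such $k$ and all $\psi \in \calW$. This step—converting ``$\psi$ vanishes at $v_0$'' into ``$J_1^k\psi$ vanishes at $v_0$ for all $k$''—is the conceptual bridge, and it is the place where invariance is genuinely used.

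With the hypothesis of Claim \ref{Induction} in hand, I would apply it to each $\psi \in \calW$: for every $n \in \mathbb{N}$, the value $\psi(\varphi_1(n))$ is uniquely determined (via a fixed linear combination depending only on $\calG^1$ and $J^1$) by the values $\{\psi(v)\}_{v \in \calK}$. Since the vertex set of $\calG^1$ is $\calK \cup \{\varphi_1(n) : n \geq 1\}$, this says precisely that $\psi$ is determined everywhere by its restriction to $\calK$. I would phrase this as injectivity of the restriction map $T \colon \calW \to \mathbb{C}^{|\calK|}$ defined by $T\psi = (\psi(v))_{v \in \calK}$: if $\psi|_{\calK} = 0$, then feeding zeros into the determining linear combinations forces $\psi(\varphi_1(n)) = 0$ for all $n$ by induction, so $\psi \equiv 0$ and $\ker T = \{0\}$.

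The contradiction is then immediate: $\dim \calW = \dim T(\calW) \leq \dim \mathbb{C}^{|\calK|} = |\calK| < \infty$, which contradicts the assumption $\dim \calW = \infty$. I do not expect any serious obstacle here, since the substantive recursive content is already packaged in Claim \ref{Induction}; the only points requiring care are (i) checking that the contradiction hypothesis together with $J^1$-invariance really supplies the hypothesis ``$J_1^k\psi(v_0)=0$ for every $k$'' uniformly over all $\psi \in \calW$, and (ii) observing that the determination in Claim \ref{Induction} is linear in $\psi$, so that it yields injectivity of $T$ rather than merely a pointwise statement. The finiteness of $\calK$ is what makes the target space finite-dimensional and closes the argument.
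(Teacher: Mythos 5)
Your proposal is correct and follows essentially the same route as the paper: assume some $v_0\in\calK$ has $\psi(v_0)=0$ for all $\psi\in\calW$, use $J^1$-invariance to get $J_1^k\psi(v_0)=0$ for all $k$, invoke Claim \ref{Induction} to conclude every $\psi\in\calW$ is determined by its values on $\calK$, and deduce $\dim\calW\leq|\calK|<\infty$, a contradiction. If anything, your write-up is more explicit than the paper's (which leaves the invariance step and the injectivity of the restriction map implicit), but the mathematical content is identical.
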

\begin{proof}
    Assume, by way of contradiction, that there is some $v_0\in \calK$, such that $\forall \psi \in \calW $ we have that $\psi(v_0)=0$. \par
    By Claim \ref{Induction}, we have that $\psi$ is completely determined by the choice of $\{\psi(w)\}_{w\in \calK}$, and in particular, since this is true for arbitrary $\psi\in\calW$, this implies that $\dim \calW<\infty $ in contradiction to the assumption, thus completing the proof. 
\end{proof}
Finally, we may prove our main theorem:
\begin{proof}[Proof of Theorem \ref{main_thm}]
    Assume by way of contradiction that we have Assumption \ref{ContradictionAssumption}. Then by Lemma \ref{wi_lemma} for each $1\leq i\leq m$ we have $\calW_i$ with the following
    \begin{enumerate}
         \item $\dim \calW_i =\infty $
         \item $\calW_i$ is $J$-invariant, namely  $J\calW_i\subset \calW_i$.
         \item For every $u\in \calW_i$, $\supp u \subset \calK\cup L_i\setminus \{\varphi_j(0)\}_{j\neq i}$.
     \end{enumerate}
     In other words, we have that any $\psi \in \calW_i$ we have that $\psi(\varphi_j(0))=0$ for any $j\neq i$.\par
     But this contradicts Claim \ref{ContraClaim}, as we may restrict $\calG$ to $\calK\cup L_i$, and $\calW_i$ is an invariant subspace by that claim. So, we may conclude that  Assumption \ref{ContradictionAssumption} doesn't hold. And in particular, it means that for every $A\in\text{Borel}\left(\mathbb{R}\right)$ which satisfies
     \begin{enumerate}
         \item $\mu_{\text{sc}}\left(A\right)>0$.
         \item $\text{Leb}\left(A\right)=0$.
         \item  $A\cap \sigma_{pp}(J)=\emptyset$
     \end{enumerate}
        we must  have $N\left(E\right)<n$ for $\mu$-almost every $E\in A$, as claimed. Finally, the fact that the bound is attained is the content of Lemma  \ref{SharpnessLemma}, proven below. 
\end{proof}
\section{Examples and further discussion}\label{discuss_section}
In this section, we will consider several examples. First, we construct a star-like graph with $m$ branches and an operator such that our lower bound is satisfied. Then, we apply our theorem to some special classes of graphs that can be described as star-like, namely spherically homogeneous trees whose branching numbers are eventually 1 (precise definition below). Finally, we include some discussion about possible generalizations of Theorem \ref{main_thm}  to more general settings - of finite range operators on a star-like graph and general gluing of self-adjoint operators.
\subsection{Sharpness}\label{SharpSection}
In the following, we will provide a graph and an operator such that the multiplicity is exactly $m-1$:
\begin{lemma}\label{SharpnessLemma}
    For any $m$ there exists a star-like graph $\calG_m$ with $m$  branches, and a Jacobi operator acting on $\ell^2(\calG_m) $ such that for $\mu_{sc}$-almost  every $E\in \bbR$ we have $N(E)=m-1$. 
\end{lemma}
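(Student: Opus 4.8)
The plan is to realize the extremal multiplicity by exploiting symmetry. I would take $\calG_m$ to be the star graph consisting of a single center $o$ joined to $m$ identical half-line branches $L_1,\dots,L_m$, each carrying one fixed half-line Jacobi operator $J^0$ on $\ell^2(\bbN)$ with purely singular continuous spectrum, with equal couplings $a$ from $o$ and a center potential $b_o$ to be chosen. The permutation group $S_m$ acts by permuting branches and commutes with $J$, so $\ell^2(\calG_m)$ splits into $J$-invariant pieces. Writing a branch-supported function as $w\otimes h$ with $w\in\bbC^m$ recording the branch and $h\in\ell^2(\bbN)$ the profile, the subspace $\calD_{a}=\{w\otimes h:\sum_i w_i=0\}$ and the subspace $\calD_{s}$ spanned by $\delta_o$ together with the symmetric profiles $(\tfrac1{\sqrt m}\mathbf 1)\otimes h$ are each $J$-invariant and together exhaust $\ell^2(\calG_m)$.

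On $\calD_{a}$ the center decouples: a direct computation gives $J(w\otimes h)=w\otimes(J^0h)$ whenever $\sum_i w_i=0$, since the branch contributions to the equation at $o$ satisfy $\sum_i a\,w_i h(1)=a\,h(1)\sum_i w_i=0$. Hence $J|_{\calD_{a}}\cong I_{m-1}\otimes J^0$, an orthogonal sum of $m-1$ identical copies of the simple operator $J^0$, so this part has singular continuous spectrum of multiplicity exactly $m-1$, carried by $\sigma_{sc}(J^0)$. In particular the full operator satisfies $N(E)\geq m-1$ on $\sigma_{sc}(J^0)$, and combined with Theorem \ref{main_thm} this pins $N(E)=m-1$ for $\mu_{sc}$-almost every $E\in\sigma_{sc}(J^0)$.

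On $\calD_{s}$ the operator is unitarily equivalent to the half-line Jacobi operator $\tilde J$ on $\bbN\cup\{0\}$ obtained from $J^0$ by prepending the site $o$ with potential $b_o$ and coupling $a\sqrt m$; here $\delta_o$ is cyclic, with $m$-function $m_{\tilde J}(z)=\bigl(b_o-z-a^2m\,m_+(z)\bigr)^{-1}$, where $m_+(z)=\langle\delta_1,(J^0-z)^{-1}\delta_1\rangle$. The decisive observation is that on the essential support of $\mu_{sc}^{J^0}$ one has $\operatorname{Im}m_+(E+i0)=\infty$, forcing $m_{\tilde J}(E+i0)=0$; thus $\tilde J$ carries no spectral mass on $\sigma_{sc}(J^0)$, and by Aronszajn--Donoghue theory the singular part of $\mu^{\tilde J}$ is mutually singular with $\mu_{sc}^{J^0}$. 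Consequently the only danger is that $\tilde J$ contributes singular continuous spectrum of its own on a set $T$ disjoint from $\sigma_{sc}(J^0)$, where the multiplicity would drop to $1$; for $m\geq 3$ this would violate the desired conclusion, so I must ensure $\mu_{sc}^{\tilde J}=0$.

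This last point is the main obstacle. Since varying $b_o$ is exactly a rank-one perturbation at $\delta_o$ that leaves the $\calD_{a}$ part untouched, I would invoke the Simon--Wolff criterion: if $J^0$ is chosen so that the reference singular measure $\mu^{\tilde J}_{\delta_o}$ is thin enough that $\int\frac{d\mu^{\tilde J}_{\delta_o}(x)}{(x-E)^2}<\infty$ for Lebesgue-almost every $E$—which can be arranged, e.g.\ with a sparse-potential $J^0$ whose singular continuous spectrum sits on a Lebesgue-null set—then for Lebesgue-almost every $b_o$ the operator $\tilde J$ has pure point spectrum, hence no singular continuous part. Fixing such a $b_o$, the entire singular continuous measure of $J$ is carried by $\sigma_{sc}(J^0)$ with $N\equiv m-1$, proving the lemma. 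The remaining ingredients—verifying the $S_m$-decomposition, the decoupling identity on $\calD_{a}$, the $m$-function formula on $\calD_{s}$, and the existence of a half-line $J^0$ with purely singular continuous spectrum meeting the integrability hypothesis—are standard, so the crux is genuinely the rank-one perturbation step that kills the symmetric part's singular continuous spectrum.
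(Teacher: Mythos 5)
Your construction and your treatment of the antisymmetric part are essentially the paper's own proof: the paper decomposes $\ell^2(\calG_m)$ into the root-of-unity sectors $\calH_k=\left\{\psi:\psi(v_j^i)=\zeta^{ik}\psi(v_j^1),\ \psi(o)=0\right\}$, $1\leq k\leq m-1$, shows each is $J$-invariant and unitarily equivalent to the half-line operator, and deduces $N\geq m-1$ on its singular continuous spectrum; your sum-zero subspace is exactly $\calH_1\oplus\cdots\oplus\calH_{m-1}$. Where you go beyond the paper is the symmetric sector: the paper's proof stops at the lower bound $N\geq(m-1)N_0$ and does not discuss whether the symmetric half-line operator $\tilde J$ contributes singular continuous spectrum of its own. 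Your concern is legitimate --- by Theorem \ref{main_thm} (or by your Aronszajn--Donoghue argument) any singular continuous mass of $\tilde J$ must be mutually singular with $\mu^{J^0}_{sc}$, hence it sits where $N=1$, which would contradict the lemma when $m\geq 3$ --- so one must indeed ensure $\mu_{sc}^{\tilde J}=0$.

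The gap is that your mechanism for ensuring this fails for the operator class you chose. For a sparse (Jitomirskaya--Last) $J^0$, the Simon--Wolff condition $\int\frac{d\mu^{\tilde J}_{\delta_o}(x)}{(x-E)^2}<\infty$ fails at \emph{every} $E\in(-2,2)$, not just on a small set: finiteness of that integral forces the existence of an $\ell^2$ solution of the underlying difference equation at $E$ (it makes $E$ an eigenvalue of some rank-one perturbation of $\tilde J$), whereas the defining feature of the sparse class --- the very reason its singular continuous spectrum survives every boundary condition, which is why it is a natural choice for the branches --- is that no solution at any $E\in(-2,2)$ is $\ell^2$. Consequently no choice of $b_o$ (or of the coupling $a$) produces point spectrum; for every choice, $\tilde J$ remains purely singular continuous in $(-2,2)$, which is exactly the scenario you needed to exclude, and the construction then fails for $m\geq 3$. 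Note also that ``singular measure supported on a Lebesgue-null set'' is automatic for any singular measure and has no bearing on the integrability condition; what is needed is the existence of decaying solutions at Lebesgue-almost every energy. The repair is to change the class of $J^0$: take a half-line operator from a localized (Anderson-type) family at an exceptional boundary condition --- by del Rio--Makarov--Simon/Gordon genericity such exceptional couplings yield purely singular continuous spectrum, while localization for almost every boundary condition gives, via Simon--Wolff, $\int\frac{d\mu(x)}{(x-E)^2}<\infty$ for Lebesgue-almost every $E$. With that choice your rank-one step does apply, almost every $b_o$ renders the symmetric sector pure point, and your argument closes the gap it correctly identified; with the sparse choice it does not.
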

\begin{proof}
    Fix any half-line operator $J_0$ with purely singular continuous spectrum inside an interval, say $I=\left[-2,2\right]$. Such operators are constructed, for example, in \cite{jitomirskaya1999power}. Denote its potential by $V_0$. Let $\calG_m$ be a star graph with $m$ branches. Denote its origin by $o$. Let us denote the neighbors of $o$ by $v_1^1,\ldots,v_1^m$ and for every $j\geq 2$ and $1\leq i\leq m$, we denote by $v_j^i$ the unique vetrex in $\calV$ which satisfies $\dist\left(v_j^i,v_1^i\right)=j$. We define a Jacobi operator on $\calG_m$ by setting
    \begin{center}
        $a_{u,v}=1\iff u\sim v$,\\$\text{}$\\$b_o=0$,\\$\text{}$\\$b_{v_j^i}=V_0\left(j\right)$ for every $1\leq i\leq m$ and $j\in\mathbb{N}$.
    \end{center}
    In other words, we consider $m$ half-lines all connected at the origin to a vertex $o$ and $m$ symmetric copies of $J_0$ defined on each of these half-lines. See Figure \ref{fig:star-graph} for a visual definition of the graph and the operator.\par Fix $\zeta$, a primitive root of unity of order $m$. For every $1\leq k\leq m-1$, consider the subspace $\calH_k\subseteq\ell^2\left(\calG_m\right)$ given by
    \begin{center}
        $\calH_k=\left\{\psi\in\ell^2\left(\calG_m\right)\mid \forall1\leq i\leq m,\,\forall j\in\mathbb{N},\,\psi\left(v_j^i\right)=\zeta^{ik}\psi\left(v_j^1\right)\text{ and }\psi\left(o\right)=0\right\}$.
    \end{center}
    That is the space of functions that are the same on each branch, up to multiplication by a phase of $\zeta^{ik} $ for the $i$th branch. \par
    We first claim that this collection is pairwise orthogonal and that for every $1\leq k\leq m-1$, $\calH_k$ is $J$-invariant.\par 
    For pairwise orthogonality, Let $\psi\in \calH_k,\varphi\in \calH_\ell$ for $1\leq k<\ell\leq m-1$. Then we have that
    \begin{align*}
        \braket{\psi,\varphi}=\sum_{i=1}^m\sum_{j\in \bbN} \overline{\psi(v_j^i)}\varphi(v_j^i)+\overline{\psi(o)}\varphi(o)=\sum_{j\in \bbN}\overline{\psi(v_j^1)}\varphi(v_j^1)\sum_{i=1}^m \zeta^{i(\ell-k)}.
    \end{align*}
    Since $\zeta$ is primitive,  we have that $\sum\limits_{i=1}^m\zeta^{ik}=0$, which concludes the claim.\par
    To see that $\calH_k$ is $J$-invariant, we need to verify two conditions. First we show that for every $\psi\in\calH_k$, for every $1\leq i\leq m$ and $j\in\mathbb{N}$,  we have that $\left(J\psi\right)\left(v_j^i\right)=\zeta^{ik}\left(J\psi\right)\left(v_j^1\right)$. Indeed, for $1\neq j\in\mathbb{N}$
    \begin{align*}
        \left(J\psi\right)\left(v_j^i\right)&=\psi\left(v_{j-1}^i\right)+\psi\left(v_{j+1}^i\right)+V_0\left(j\right)\psi\left(v_j^i\right)\\
        &=\zeta^{ik}\psi\left(v_{j-1}^1\right)+\zeta^{ik}\psi\left(v_{j+1}^1\right)+\zeta^{ik}V_0\left(j\right)\psi\left(v_{j}^1\right)\\
        &=\zeta^{ik}\left(J\psi\right)\left(v_j^1\right).
    \end{align*}
    The proof for $j=1$ is almost identical, using the fact that $\psi(o)=0$.\par
    Next, we will show that for every $\psi\in\calH_k$, $\left(J\psi\right)\left(o\right)=0$. To see this, we write
    \begin{align*}
        \left(J\psi\right)\left(o\right)=\sum_{j=1}^m\psi\left(v_{1}^i\right)=\psi\left(v_{1}^1\right)\sum_{j=1}^m\zeta^{ik}=0,
    \end{align*}
    utilizing the fact that $\zeta$ is a primitive root of unity. \par 
    We have concluded that $\calH_k$ is $J$ invariant. We now claim that $J|_{\calH_k}$ is unitarily equivalent to $J_0$. Indeed, define $U:\calH_k\to\ell^2\left(\mathbb{N}\right)$ by
    \begin{center}
        $\left(U\psi\right)\left(n\right)=\frac{1}{n}\psi\left(v_n^1\right)$.
    \end{center}
    It is not hard to verify that $U$ is unitary and that it intertwines $J$ and $J_0$. To conclude, we get that $J$ is unitarily equivalent to the direct sum of $\oplus_{k=1}^{m}J^k$, where for every $1\leq k\leq m-1$, $J^k\coloneqq J|_{\calH_k}$ is unitarily equivalent to $J_0$. In that case, we have that if $N_0$ is a multiplicity function for $J_0$ and $N$ is a multiplicity function for $E$, then $N\geq\left(m-1\right)N_0$ $\mu$-almost everywhere (the proof of this fact can be found for example in \cite{ovalle2025schr}). Finally, since $J_0$ is a half-line operator, then its multiplicity function can be taken to be constant $1$ on its spectrum. Thus, for $\mu$-almost every $E\in\left[-2,2\right]$, $N\left(E\right)\geq m-1$. together with Theorem \ref{main_thm}, this implies that $N\left(E\right)= m-1$, as required. 
\end{proof}
\begin{figure}[h!]
\centering
\begin{tikzpicture}[scale=2, every node/.style={font=\small}]

\def\n{6}
\def\rone{0.4}    
\def\rtwo{0.8}    
\def\rthree{1.2}  
\def\rdash{1.7}   
\def\rlabel{1.85} 

\foreach \i in {1,...,6} {
    \pgfmathsetmacro{\angle}{(\i - 1) * 360/\n}
    \draw[thick] (0,0) -- (\angle:\rone);
    \draw[thick] (\angle:\rone) -- (\angle:\rtwo);
    \draw[thick] (\angle:\rtwo) -- (\angle:\rthree);
    \draw[dashed, thick] (\angle:\rthree) -- (\angle:\rdash);
    
    \filldraw (\angle:\rone) circle (0.02);
    \filldraw (\angle:\rtwo) circle (0.02);
    \filldraw (\angle:\rthree) circle (0.02);
    
    \node at (\angle:\rlabel) {\(\i\)};
}

\filldraw (0,0) circle (0.02);
\node[above=2pt] at (0,0) {$o$};

\node[below=2pt] at (\rone,0) {$v_1^1$};
\node[below=2pt] at (\rtwo,0) {$v_2^1$};
\node[below=2pt] at (\rthree,0) {$v_3^1$};

\end{tikzpicture}
\caption{A star graph with 6 branches. Here, the vertices of $\calG_1$ are $\left\{v_1,v_2,v_3\ldots\right\}$ and the potential $b$ is given by $b\left(v_i\right)=V_0\left(i\right)$. In addition, $b\left(o\right)=0$.}
\label{fig:star-graph}
\end{figure}
\subsection{Spherically homogeneous trees and multiplicity of the spectrum}\label{branching_section}
We will now consider Jacobi operators on trees. In that case, denote the root by $o$. For every $n\in\mathbb{N}$, we will denote $B_n=B_n\left(o\right)$ and $S_n=\partial B_n (o)$. Let $\left(b_n\right)_{n\in\mathbb{N}}$ be a sequence of natural numbers. Following \cite{breuer2007singular}, we say that a tree $\calT$ is spherically homogeneous with branching numbers $\left(b_n\right)_{n\in\mathbb{N}}$ if for every $n\in\mathbb{N}$, for every $v\in S_{n-1}$,  we have $\left|\left\{u\in S_n:u\sim v\right\}\right|=b_n$. In other words, the number of neighbors of $u$ at the $n$ level is $b_n$.  We refer the reader to Figure \ref{sg_fig}, Item (c) for an example.

A simple consequence of Theorem \ref{main_thm} is the following
\begin{proposition}\label{const_branching_prop}
    Let $J$ be a Jacobi operator on a spherically homogeneous tree with branching numbers $\left(b_n\right)_{n\in\mathbb{N}}$. Suppose that there exists $N\in\mathbb{N}$ such that for every $n>N$, $b_n=1$. Let $k=\left|S_{N+1}\right|$ and let $N_J$ be the multiplicity function of $J$. Then for $\mu_{sc}$-almost every $E\in\mathbb{R}$, $N_J\left(E\right)\leq k-1$.
\end{proposition}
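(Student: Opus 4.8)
The plan is to recognize that the hypothesis $b_n=1$ for all $n>N$ turns $\calT$ into a star-like graph with exactly $k$ branches, and then to invoke Theorem \ref{main_thm} verbatim; there is no new analytic content, only a structural identification. First I would fix the compact component to be $\calK=B_N$. This set is finite, since the tree is locally finite and $|S_j|=\prod_{n=1}^{j}b_n<\infty$ for every $j$. The key observation is that the assumption forces the tree beyond level $N$ to cease branching: every vertex at level $\ell\geq N$ has exactly $b_{\ell+1}=1$ child, because $\ell+1>N$.

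Next I would exhibit the half-line decomposition. For each $v\in S_N$, set $\varphi_v(0)=v$ and let $\varphi_v(j)$ be the unique descendant of $v$ at level $N+j$ (unique precisely because $b_{N+1}=b_{N+2}=\cdots=1$), and let $L_v$ be the corresponding image. I would then verify the three defining properties of a branch: for $j\geq 1$ the vertex $\varphi_v(j)$ has exactly one parent $\varphi_v(j-1)$ and exactly one child $\varphi_v(j+1)$, so $B_1(\varphi_v(j))=\{\varphi_v(j-1),\varphi_v(j+1)\}$; the root $\varphi_v(0)=v\in S_N$ has its parent in $S_{N-1}\subset\calK$ and its single child $\varphi_v(1)$, so $B_1(\varphi_v(0))\subset\calK\cup\{\varphi_v(1)\}$; and $\calK\cap L_v=\{v\}$ since every other vertex of $L_v$ lies at a level $>N$. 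Because $\calT$ is a tree these half-lines are pairwise disjoint away from $\calK$ and together with $B_N$ exhaust $\calV$, so $\calT$ is indeed star-like with one branch per vertex of $S_N$.

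It then remains only to count the branches. The number of branches is $m=|S_N|$, and since $b_{N+1}=1$ we have $|S_{N+1}|=|S_N|\cdot b_{N+1}=|S_N|$, whence $m=|S_{N+1}|=k$. Applying the first part of Theorem \ref{main_thm} to this star-like graph yields $N_J(E)=N(E)\leq m-1=k-1$ for $\mu_{sc}$-almost every $E$, which is the claim. I expect the only delicate point to be the bookkeeping of the branch count, and in particular the correct choice of compact component: one must take $\calK=B_N$ rather than $B_{N-1}$, since the latter can fail the root-neighborhood property when $b_N>1$. This is an organizational subtlety rather than a genuine obstacle.
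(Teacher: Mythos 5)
Your proposal is correct and follows exactly the paper's argument: the paper likewise observes that such a tree is a star-like graph with $k$ branches and then applies Theorem \ref{main_thm}, leaving the structural identification as "not hard to see." Your verification of the branch decomposition with $\calK=B_N$, and the count $m=|S_N|=|S_{N+1}|=k$ via $b_{N+1}=1$, simply fills in the details the paper omits.
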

\begin{proof}
    It is not hard to see that every such spherically homogeneous tree is a star-like graph with $k$ branches. The result now immediately follows from Theorem \ref{main_thm}.
\end{proof}
A certain notion of dimension was defined in \cite{breuer2007singular} for such trees.
\begin{definition}
    Let $\calT$ be a spherically homogeneous tree. The dimension of $\calT$ is given by
    \begin{center}
        $\dim_o\left(\calT\right)=\underset{n\to\infty}{\limsup}\,\frac{\log\left|B_n\right|}{\log n}$.
    \end{center}
\end{definition}
Note that if $\calT$ is spherically homogeneous, which satisfies the assumptions of Proposition \ref{const_branching_prop}, then its dimension is precisely $1$. Indeed, note that there exists $N\in\mathbb{N}$ such that for every $n>N$, $S_n=k$, as defined in Proposition \ref{const_branching_prop}. With that in mind, we have
\begin{center}
    $\underset{n\to\infty}{\limsup}\,\frac{\log\left|B_n\right|}{\log n}=\underset{n\to\infty}{\limsup}\,\frac{\log\left(\sum\limits_{i=1}^n\left|S_i\right|\right)}{\log n}=\underset{n\to\infty}{\limsup}\,\frac{\log\left(\sum\limits_{i=1}^N\left|S_i\right|+\left(n-N\right)k\right)}{\log n}=\underset{n\to\infty}{\limsup}\,\frac{\log\left(nk\left(1+c_n\right)\right)}{\log n}\to 1$
\end{center}
where $c_n=\frac{1}{nk}\left(\sum\limits_{i=1}^n\left|S_i\right|-Nk\right)$, and the last assertion follows from the fact that $c_n\to 0$.

With that in mind, a natural generalization of Proposition \ref{const_branching_prop} will be the following
\begin{question}
    Let $\calT$ be a spherically homogeneous tree with dimension $1$ and let $J$ be a Jacobi operator acting on $\ell^2\left(\calT\right)$. Let $\mu$ be a Borel measure which is equivalent to the projection-valued measure associated with $J$. Is it true that for $\mu_{sc}$-almost every $E\in\mathbb{R}$, $N_J\left(E\right)<\infty$?
\end{question}

\subsection{Finite range operator on star-like graph}\label{FiniteRangeSection}
A natural generalization of the above setting is to extend this result to a more general class of operators acting on $\ell^2\left(\calG\right)$. So, one may consider a general star-like graph $\calG$ with $m$ branches and consider a finite range operator $H$. That is, for any function $\psi \in \ell^2(\calV)$, and for every $v\in \calV$ we have that $H\psi(v)$ involves $\psi(u)$ for only finitely many $u\in \calV$.\par
We note that we used the simplicity of the singular continuous spectrum in a crucial way in the construction of the invariant subspace (Lemma \ref{wi_lemma}). Thus, a natural question will be:
\begin{question}
    Given a star-like graph $\calG$ with $m$ branches and a self-adjoint finite-range operator $H:\ell^2(\calV)\rightarrow \ell^2(\calV)$, such that each restriction to a branch $H_i$ has simple singular continuous spectrum. In other words, for each $i$, we have that $H_i$ is unitarily equivalent to $H^i$, a self-adjoint operator on $\ell^2(\bbN)$ with multiplicity $1$ in the singular continuous spectrum.  Is it true that for $\mu_{sc}$-almost every $E\in\mathbb{R}$, $N_H\left(E\right)\leq m-1$?
\end{question}
The main obstacle is the lack of subordinacy theory for such operators and its connections to the generalized eigenfunction expansion. 
\subsection{General setting - pasting self-adjoint operators}\label{PastingSection}
One natural perspective on the setting above is to consider it a "gluing," a collection of Jacobi operators defined on half lines via the compact component of the graph. This point of view leads to a natural question: Can we extend this work to a general gluing of general operators? For that, we propose the following construction:
Let $\mathcal{H}_1,\ldots,\mathcal{H}_n$ be Hilbert spaces. For $i=1,\ldots,n$, let $T_i:\mathcal{H}_i\to\mathcal{H}_i$ be a self-adjoint operator with a cyclic vector $\varphi_i$. Also, let $\calC$ be a graph with $n$ vertices, and let $A$ be a weighted adjacency matrix (with real positive weights) associated with $\calC$. The pasting of $T_1,\ldots,T_n$ w.r.t.\ the pair $\left(\calC,A\right)$ is a self-adjoint operator $T$, acting on $\mathcal{H}\coloneqq\underset{i=1,\ldots,n}{\oplus}\mathcal{H}_i$. Given $\psi\in\mathcal{H}$, let us write $\psi=\left(\psi_1,\ldots,\psi_n\right)$, where $\psi_i\in\mathcal{H}_i$. Let us also denote $v=\left(\langle\psi_1,\varphi_1\rangle,\ldots,\langle\psi_n,\varphi_n\rangle\right)$. Now, $T\psi$ is defined by
\begin{center}
    $T\left(\psi_1,\ldots,\psi_n\right)=\left(T_1\psi_1,\ldots,T_n\psi_n\right)+\left((Av)_1\cdot \varphi_1,\ldots,(Av)_n\cdot\varphi_n\right)$,
\end{center}
It is not hard to verify that $T$ is indeed a self-adjoint operator. Furthermore, the set $\left\{\varphi_1,\ldots,\varphi_n\right\}$ (considered as embedded in $\calH$) is cyclic for $T$, and so its multiplicity is bounded from above by $n$ (as the multiplicity of $T_i$ is $1$).

There seems to be some connection between this approach and the one studied in \cite{simonov2024local}. However, the language in which the setting is described is substantially different. In \cite{simonov2024local}, the operator is defined via the pasting of boundary relations along a matrix. Then, it is proved that the multiplicity of the singular spectrum is bounded from above by the number of boundary relations which are non-trivial in a certain sense.

Given our definition of the pasting of self-adjoint operators with cyclic vectors, the following is a natural question.
\begin{question}
    Suppose that $\calC$ is a connected graph, and let $k=\left|\left\{i:\sigma_{sc}\left(T_i\right)\neq\emptyset\right\}\right|$. Let $\mu$ be a spectral measure that is equivalent to the projection-valued measure associated with $T$, as defined above, and let $N_T$ be a multiplicity function for $T$. Is it true that for $\mu_{sc}$-almost every $E\in\mathbb{R}$, $N_T\left(E\right)\leq k-1$?
\end{question}
We note that naturally, this more general setting contains the question asked in subsection \ref{FiniteRangeSection}.
\bibliographystyle{amsplain}
\bibliography{bib}

\end{document}